\documentclass[11pt]{amsart}
\usepackage{amsmath,amsthm,mathrsfs,amsfonts,amssymb,color}

\newtheorem{thm}{Theorem}[section]
\newtheorem{lem}[thm]{Lemma}

\theoremstyle{definition}

\newtheorem{dfn}[thm]{Definition}

\parskip 0.2 cm

\textwidth =160mm \textheight =230mm
\oddsidemargin -0.3mm \evensidemargin -0.3mm
\headheight=13pt \setlength{\topmargin}{-0.5cm}

\DeclareMathOperator{\diam}{diam}

\newcommand{\PP}{\mathbb{P}}

\newcommand{\GGp}{{\mathop G\limits^{\circ}}}

\newcommand{\GG}{\mathop G \limits^{    \circ}}

\newcommand{\bmo}{{\rm BMO}}
\newcommand{\vmo}{{\rm VMO}}
\newcommand{\cmo}{{\rm CMO}}
\newcommand{\littlebmo}{\textit{bmo}}
\newcommand{\Z}{\mathbb{Z}}
\newcommand{\D}{\mathcal{D}}
\newcommand{\intav}{-\hspace{-0.05cm}\!\!\!\!\!\!\int}

\newcommand{\om}{\omega}
\newcommand{\Om}{\Omega}

\numberwithin{equation}{section}
\allowdisplaybreaks
\begin{document}

\baselineskip 16pt
\hfuzz=6pt

\title[Weak-star convergence in product $H^1$ on spaces of
homogeneous type]{On weak-star convergence in product Hardy
spaces on spaces of homogeneous type}

\author[Ming-Yi Lee, Ji Li and Lesley A.~Ward]{Ming-Yi Lee,
Ji Li and Lesley A.~Ward}

\address{Department of Mathematics\\
National Central University\\
Chung-Li 320, Taiwan \newline
\& \newline
National Center for Theoretical Sciences\\
1 Roosevelt Road, Sec. 4\\
National Taiwan University\\
Taipei 106\\
Taiwan}
\email{mylee@math.ncu.edu.tw}

\address{Department of Mathematics\\
Macquarie University\\
NSW, 2109, Australia}
\email{ji.li@mq.edu.au}

\address{
School of Information Technology and Mathematical
Sciences, University of South Australia, Mawson Lakes SA 5095,
Australia}
\email{{\tt lesley.ward@unisa.edu.au}}

\thanks{
The first author is supported by Grant \#MOST
103-2115-M-008-001. This article was written while the second
author was visiting National Central University. The second
author would like to thank the Mathematics Research Promotion
Center for support. The second author is supported
by the Australian Research Council, grant no.~ARC-DP160100153, and the third author is supported
by the Australian Research Council, grant no.~ARC-DP120100399.
}

\subjclass[2010]{42B30, 42B35, 30L99}

\keywords{Weak-star convergence, $\bmo$, Hardy spaces, $\vmo$,
spaces of homogeneous type, multiparameter}

\begin{abstract}
    A classical theorem of Jones and Journ\'e on weak-star
    convergence in the Hardy space~$H^1$ was generalised to the
    multiparameter setting by Pipher and Treil. We prove the
    analogous result when the underlying space is a product
    space of homogeneous type. The main tools we use for this
    setting are from recent work in papers by Chen, Li and Ward
    and by Han, Li and Ward.
\end{abstract}

\maketitle

\section{Introduction}\label{sec-Intro}
\setcounter{equation}{0}

In this paper we extend to the setting of product Hardy
spaces~$H^1$ on spaces of homogeneous type the result that
almost-everywhere convergence of a sequence of uniformly
bounded $H^1$ functions implies weak-star convergence.
See~\cite{PT} for the history of this result and its
connections with commutators, singular integral operators,
Riesz transforms, $\bmo$, div-curl lemmas, and the theory of
compensated compactness in partial differential equations.

Our main result is the following.

\begin{thm}\label{thm:1.1}
    Suppose that a sequence of functions $\{f_k\}\subset
    H^1(X_1\times \cdots\times X_n)$ satisfies
    $\|f_k\|_{H^1}\le 1$ for all $k$ and $f_k(x)\to f(x)$ for
    $\mu$-almost every $x\in X_1\times \cdots\times X_n$. Then
    $f\in H^1(X_1\times \cdots\times X_n)$,  $\|f\|_{H^1}\le
    1$, and for all $\phi\in \vmo(X_1\times \cdots\times X_n)$,
    \begin{equation}\label{eq:1.1}
        \int_{X_1\times \cdots\times X_n} f_k(x)\phi(x) \, d\mu(x)
        \longrightarrow \int_{X_1\times \cdots\times X_n} f(x)\phi(x) \, d\mu(x).
    \end{equation}
\end{thm}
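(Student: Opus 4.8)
The plan is to follow the classical Jones--Journé / Pipher--Treil strategy, adapted to the product space-of-homogeneous-type setting using the duality $(H^1)^* = \bmo$ and the predual relation $\vmo^* = H^1$. These two duality results, together with the wavelet/square-function characterizations of $H^1$ and $\bmo$ on products of spaces of homogeneous type, are exactly what the cited works of Chen--Li--Ward and Han--Li--Ward provide, so I would quote them as the main machinery rather than reprove them.

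\medskip

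First I would establish that the limit function $f$ lies in $H^1$ with $\|f\|_{H^1}\le 1$. By $H^1$--$\bmo$ duality, the closed unit ball of $H^1$ is contained in the closed unit ball of the dual $(\vmo)^{**}$; more usefully, I would identify $H^1$ with the dual of $\vmo$ and use the Banach--Alaoglu theorem. Since $\{f_k\}$ is bounded in $H^1 = (\vmo)^*$, it has a weak-star convergent subsequence, with limit some $g\in H^1$ satisfying $\|g\|_{H^1}\le \liminf_k\|f_k\|_{H^1}\le 1$. The task is then to show $g=f$ $\mu$-a.e., which upgrades the subsequential weak-star limit to an honest identification of $f$. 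Here I would test against a suitable dense class: integrating $f_k\phi$ over any fixed ball (or product of balls) and using the uniform $L^1_{\mathrm{loc}}$ control that follows from $\|f_k\|_{H^1}\le 1$, together with the a.e.\ convergence $f_k\to f$, to force the weak-star limit $g$ to coincide with $f$. The standard device is a truncation argument combined with the Dunford--Pettis / Vitali equi-integrability criterion: uniform $H^1$ boundedness gives enough equi-integrability on bounded sets to pass from a.e.\ convergence to weak-$L^1$ convergence locally, pinning down $g=f$.

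\medskip

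Second, having shown $f\in H^1$ with the norm bound, I would prove the weak-star convergence \eqref{eq:1.1} against every $\phi\in\vmo$. The key point is that a.e.\ convergence plus uniform $H^1$ bounds does \emph{not} give convergence against arbitrary $\bmo$ functions, but does against $\vmo$ functions, because $\vmo$ is the closure of nice (say, continuous compactly supported, or finite-wavelet-sum) functions in the $\bmo$ norm. So I would split: for $\phi$ in the dense nice subclass, $\int f_k\phi\to\int f\phi$ follows directly from a dominated/equi-integrable convergence argument on the support, using the local $L^1$ control again. For general $\phi\in\vmo$, approximate $\phi$ by $\phi_\varepsilon$ from the dense class in $\bmo$ norm, and control the error uniformly in $k$ by the $H^1$--$\bmo$ pairing estimate $|\int f_k(\phi-\phi_\varepsilon)|\le C\|f_k\|_{H^1}\|\phi-\phi_\varepsilon\|_{\bmo}\le C\|\phi-\phi_\varepsilon\|_{\bmo}$. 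A three-$\varepsilon$ argument then closes the gap.

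\medskip

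The hard part, I expect, will not be the abstract functional-analytic skeleton but rather the two places where the product structure and the homogeneous-type geometry intrude. The more delicate of these is justifying the passage from a.e.\ convergence to local weak-$L^1$ convergence: on a product of spaces of homogeneous type one lacks the clean dyadic/Fourier tools of $\mathbb{R}^n$, so I would lean on the (multiparameter) square-function or wavelet characterization of $H^1$ from Han--Li--Ward to produce the needed equi-integrability, and on the Auscher--Hytönen-type orthonormal wavelet systems underlying the Chen--Li--Ward duality to make the dense approximating class in $\vmo$ explicit. The second subtlety is ensuring the $H^1$--$\bmo$ duality pairing is genuinely given by the integral $\int f_k\phi\,d\mu$ for the relevant test functions, rather than only by an abstract pairing; reconciling these two descriptions of the pairing, in the multiparameter setting where cancellation conditions are subtle, is where I would allocate the most care.
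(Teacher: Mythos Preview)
There is a genuine gap in your argument, and it occurs precisely at the step you flag as ``standard'': the claim that uniform $H^1$ boundedness yields local equi-integrability is false, and with it your Dunford--Pettis/Vitali route to passing from a.e.\ convergence to local weak-$L^1$ convergence collapses. A sequence of $H^1$-normalized atoms $a_k$ supported on balls $B_k$ with $\mu(B_k)\to 0$ satisfies $\|a_k\|_{H^1}\le C$ and $a_k\to 0$ a.e., yet $\|a_k\|_\infty\sim \mu(B_k)^{-1}$ and $\int_{\{|a_k|>\lambda\}}|a_k|\,d\mu$ does not tend to zero uniformly in $k$, so the family is not equi-integrable on any neighbourhood containing the $B_k$. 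The convergence $\int a_k\phi\to 0$ holds not by domination but by cancellation against the smoothness of~$\phi$. The paper in fact anticipates exactly this: it remarks that since $H^1$ is not reflexive, the predual identification $H^1=(\vmo)^*$ does \emph{not} yield a functional-analytic proof via known methods.

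What the paper actually does---following Jones--Journ\'e and Pipher--Treil---is construct, for each small $\delta$ and each large $k$, a bump function $\tau$ with $\tau\equiv 1$ on the bad set $E_k=\{|f_k-f|>\eta\}$, with $\|\tau\|_{\littlebmo}\le C\delta$ and $\mu(\operatorname{supp}\tau)$ small. In one parameter $\tau=\max\{0,1+\delta\log M\chi_{E_k}\}$; in the product case the strong maximal function is iterated to produce an $A_1$ weight whose logarithm lies in little~$\littlebmo$. One then splits $\int(f-f_k)\phi=\int(f-f_k)\phi(1-\tau)+\int(f-f_k)\phi\tau$. The first piece is small because $|f-f_k|\le\eta$ off~$E_k$; the second is handled by showing $\int_{\operatorname{supp}\tau}|f|\le\delta$ (small support) and, crucially, $|\int f_k\phi\tau|\le\|f_k\|_{H^1}\|\phi\tau\|_{\bmo}\le C\delta$. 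The heart of the proof is the estimate $\|\phi\tau\|_{\bmo(X_1\times X_2)}\le C\delta$, obtained by combining the H\"older regularity and decay of the test function $\phi$ with the $\littlebmo$ control on~$\tau$, and then passing from dyadic to continuous product $\bmo$ via the averaging theorem of Chen--Li--Ward. Your three-$\varepsilon$ reduction to a dense class in $\vmo$ is fine, but the inner step for nice $\phi$ needs this bump-function machinery, not equi-integrability.
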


To extend the Calder\'on--Zygmund singular integral operator
theory to a more general setting, in the early 1970s Coifman
and Weiss introduced spaces of homogeneous type. As Meyer
remarked in his preface to~\cite{DH}, \emph{``One is amazed by
the dramatic changes that occurred in analysis during the
twentieth century. \ldots
After many improvements, mostly achieved by the
Calder\'on--Zygmund school, the action takes place today on
spaces of homogeneous type. No group structure is available,
the Fourier transform is missing, but a version of harmonic
analysis is still present. Indeed the geometry is conducting
the analysis.''} We say that $(X,d,\mu)$ is a \emph{space of
homogeneous type in the sense of Coifman and Weiss} if $d$ is a
quasi-metric on~$X$ and $\mu$ is a nonzero measure satisfying
the doubling condition. To be more precise, let us begin by
recalling these spaces. A quasi-metric $d$ on a set~$X$ is a
function $d:X\times X\longrightarrow[0,\infty)$ satisfying
\begin{enumerate}
    \item[(1)] $d(x,y) = d(y,x) \geq 0$ for all $x$, $y\in
        X$;

    \item[(2)] $d(x,y) = 0$ if and only if $x = y$; and

    \item[(3)] the quasi-triangle inequality holds: there
        exists a constant $A_0\in [1,\infty)$ such that for
        all $x$, $y$ and $z\in X$,
    \begin{eqnarray}\label{quasi metric}
        d(x,y)
        \leq A_0 [d(x,z) + d(z,y)].
    \end{eqnarray}
\end{enumerate}
We define the quasi-metric ball by $ B(x,r) := \{y\in X: d(x,y)
< r\}$ for $x\in X$ and $r > 0$. Note that the quasi-metric, in
contrast to a metric, may not be H\"older regular and
quasi-metric balls may not be open. In this paper, we assume that
\begin{enumerate}
    \item[(4)] given a neighborhood $N$ of a point $x$
        there is an $\epsilon>0$ such that the sphere
        $\{y\in X: d(x,y)\le \epsilon\}$ with center at $x$
        is contained in $N$; and

    \item[(5)] the sphere $\{y\in X:  d(x,y)\le r\}$ is
        measurable, and the measure $\mu(\{y\in X:
        d(x,y)\le r\})$ is a continuous function of $r$ for
        each $x$.
\end{enumerate}
We say that a nonzero measure $\mu$ satisfies the
\emph{doubling condition} if there is a constant $C_\mu$ such
that for all $x\in X$ and all $r > 0$,
\begin{eqnarray}\label{doubling condition}
   \mu(B(x,2r))
   \leq C_\mu \mu(B(x,r))
   < \infty.
\end{eqnarray}

As noted by the reviewer of~\cite{PT} in Mathematical Reviews,
since $H^1$ is not reflexive, the fact that $H^1$ is the dual
of $\vmo$ does not lead to a functional analytic proof of this
result using known methods.

The paper is organised as follows. In Section~\ref{sec-Pre} we
present some background about spaces of homogeneous type. In
Section~\ref{sec-thm1.1-one-parameter} we prove the
one-parameter version of our result, and in
Section~\ref{sec-thm1.1-product} we prove the product version.

\section{Preliminaries}\label{sec-Pre}
\setcounter{equation}{0}

We recall the ingredients and tools that we will use below to
prove Theorem~\ref{thm:1.1}, namely systems of dyadic cubes,
the orthonormal basis and wavelet expansion of Auscher and
Hyt\"onen~\cite{AH}, the spaces of test functions and of
distributions, the definitions from~\cite{HLW} (using these
spaces) of $H^1$, $\bmo$ and $\vmo$ on product spaces of
homogeneous type, and the duality relations between them.
See~\cite{HLW} for a full account of this material.

\subsection{Systems of dyadic cubes in a doubling quasi-metric space}

Let $X$ be a set equipped with a quasi-metric $d$ and a
doubling measure $\mu$; in particular, $(X,d,\mu)$ is a
space of homogeneous type. As shown in~\cite{HK}, building
on~\cite{Chr}, there exists a dyadic decomposition for this
space~$X$. There exist positive absolute constants $c_1$, $C_1$
and $0<\delta<1$ such that we can construct a set of points
$\{x_\alpha^k\}_{k,\alpha}$ and families of sets
$\{Q_\alpha^k\}_{k,\alpha}$ in $X$ satisfying the following
properties:
\begin{eqnarray}
    &&\textup {if } \ell\leq k, \textup { then either } Q_\alpha^k\subset
        Q_\beta^\ell \textup { or } Q_\alpha^k
        \cap Q_\beta^\ell=\emptyset;\label {DyadicP1}\\
    &&\textup {for every } k\in \Z \textup { and } \alpha\neq\beta,  Q_\alpha^k\cap
       Q_\beta^k=\emptyset; \label {DyadicP2}\\
    &&\textup {for every } k\in \Z,\,
        X=\bigcup_\alpha Q_\alpha^k;\label {DyadicP3} \\
    && B(x_\alpha^k,c_1\delta^k)\subset Q_\alpha^k\subset
        B(x_\alpha^k,C_1\delta^k);\label {prop_cube4}\\
    &&\textup {if } \ell\leq k \textup { and } Q_\alpha^k\subset Q_\beta^\ell,
        \textup { then } B(x_\alpha^k,C_1\delta^k)\subset
        B(x_\beta^\ell,C_1\delta^\ell).\label {DyadicP5}
\end{eqnarray}
Here for each $k\in\mathbb{Z}$, $\alpha$ runs over an
appropriate index set. We call the set $Q_\alpha^k$ a
\emph{dyadic cube} and $x_\alpha^k$ the \emph{center} of the
cube. Also, $k$ is called the \emph{level} of this cube. We
denote the collection of dyadic cubes at level $k$ by $\D^k$,
and the collection of all dyadic cubes by~$\D$. When
$Q_\alpha^k\subset Q_\beta^{k-1}$, we say $Q_\alpha^k$ is a
child of $Q_\beta^{k-1}$ and $Q_\beta^{k-1}$ is the parent
of~$Q_\alpha^{k}$. Because $X$ is a space of homogeneous type,
there is a uniform constant~$\mathcal{N}$ such that each cube
$Q\in\D$ has at most~$\mathcal{N}$ children.

\subsection{Orthonormal basis and wavelet expansion}


We recall the orthonormal basis and wavelet expansion of
$L^2(X)$ due to Auscher and Hyt\"onen~\cite{AH}. To state their
result, we first recall the set of {\it reference dyadic
points} $x_\alpha^k$ as follows. Let $\delta$ be a fixed small
positive parameter (for example, as pointed out in Section 2.2
of \cite{AH}, it suffices to take $\delta\leq 10^{-3}
A_0^{-10}$). For $k=0$, let
$\mathscr{X}^0:=\{x_\alpha^0\}_\alpha$ be a maximal collection
of 1-separated points in~$ X $. Inductively, for
$k\in\mathbb{Z}_+$, let $\mathscr{X}^k:=\{x_\alpha^k\}
\supseteq \mathscr{X}^{k-1}$ and
$\mathscr{X}^{-k}:=\{x_\alpha^{-k}\} \subseteq
\mathscr{X}^{-(k-1)}$ be $\delta^k$- and
$\delta^{-k}$-separated collections in $\mathscr{X}^{k-1}$ and
$\mathscr{X}^{-(k-1)}$, respectively.

Lemma 2.1 in \cite{AH} shows that, for all $k\in\mathbb{Z}$ and
$x\in X$, the reference dyadic points satisfy
\begin{eqnarray}\label{delta sparse}
    d(x_\alpha^k,x_\beta^k)\geq\delta^k\ (\alpha\not=\beta),\hskip1cm
        d(x,\mathscr{X}^k)
    =\min_\alpha\,d(x,x_\alpha^k)<2A_0\delta^k.
\end{eqnarray}

Now let $c_0:=1$, $C_0:=2A_0$ and $\delta\leq 10^{-3} A_0^{-10}$.
Then
%
%
there exists a set of half-open dyadic cubes
$\{Q_\alpha^k\}_{k\in\mathbb{Z},\alpha\in\mathscr{X}^k}$
associated with the reference dyadic points
$\{x_\alpha^k\}_{k\in\mathbb{Z},\alpha\in\mathscr{X}^k}$. We
consider the reference dyadic point $x_\alpha^k$ as the
\emph{center} of the dyadic cube $Q_\alpha^k$. We also identify
with $\mathscr{X}^k$ the set of indices $\alpha$ corresponding
to $x_\alpha^k \in \mathscr{X}^{k}$.

Note that $\mathscr{X}^{k}\subseteq \mathscr{X}^{k+1}$ for
$k\in\mathbb{Z}$, so that every $x_\alpha^k$ is also a point of the
form $x_\beta^{k+1}$, and thus  of all the finer levels. We denote
$\mathscr{Y}^{k}:=\mathscr{X}^{k+1}\backslash \mathscr{X}^{k}$, and
relabel the points $\{x_\alpha^k\}_\alpha$ that belong
to~$\mathscr{Y}^k$ as $\{y_\alpha^k\}_\alpha$.


\begin{thm}[\cite{AH} Theorem 7.1]\label{theorem AH orth basis}
    Let $(X,d,\mu)$ be a space of homogeneous type with
    quasi-triangle constant $A_0$, and let $a:=(1+2\log_2A_0)^{-1}$. There
    exists an orthonormal basis $\psi_\alpha^k$, $k\in\mathbb{Z}$,
    $y_\alpha^k\in \mathscr{Y}^k$, of $L^2(X)$, having exponential decay
    \begin{eqnarray}\label{exponential decay}
        |\psi_\alpha^k(x)|
        \leq {C\over \sqrt{\mu(B(y_\alpha^k,\delta^k))}}
            \exp(-\nu(\delta^{-k} d(y_\alpha^k,x))^a),
    \end{eqnarray}
    H\"older-regularity
    \begin{eqnarray}\label{Holder-regularity}
        |\psi_\alpha^k(x)-\psi_\alpha^k(y)|
        \leq {C\over \sqrt{\mu(B(y_\alpha^k,\delta^k))}}
            \Big( {d(x,y)\over\delta^k}\Big)^\eta
            \exp(-\nu(\delta^{-k} d(y_\alpha^k,x))^a)
    \end{eqnarray}
    for some $\eta\in(0,1]$ and for $d(x,y)\leq \delta^k$, and the
    cancellation property
    \begin{eqnarray}\label{cancellation}
        \int_X \psi_\alpha^k(x) \, d\mu(x) = 0,\
            k\in\mathbb{Z},\
            y_\alpha^k\in\mathscr{Y}^k.
    \end{eqnarray}
    Moreover,
    \begin{eqnarray}\label{reproducing formula}
        f(x)
        =\sum_{k\in\mathbb{Z}}\sum_{\alpha \in \mathscr{Y}^k}
            \langle f,\psi_{\alpha}^k \rangle \psi_{\alpha}^k(x)
    \end{eqnarray}
    in the sense of $L^2(X)$.

    Here $\delta$ is a fixed small parameter, say $\delta\leq
    {1\over 1000}A_0^{-10}$, and $\nu > 0$ and $C<\infty$ are
    two constants that are independent of $k$, $\alpha$, $x$
    and $y_\alpha^k$. In what follows, we also refer to the
    functions $\psi_\alpha^k$ as wavelets.
\end{thm}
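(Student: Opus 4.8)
The plan is to obtain the $\psi_\alpha^k$ as the orthonormal wavelets of a multiresolution analysis built directly from the reference dyadic points and the separation and covering estimates in~\eqref{delta sparse} and~\eqref{prop_cube4}. First I would replace the non-smooth conditional expectations onto the dyadic cubes by a smooth approximation of the identity. For each level~$k$ and each reference point $x_\alpha^k$ I would fix a H\"older-continuous bump comparable to $1$ on $B(x_\alpha^k,c_0\delta^k)$ and decaying away from $x_\alpha^k$; normalizing these bumps into a partition of unity and pairing them against the mean values of $f$ over the associated regions yields operators
\begin{equation*}
S_kf(x)=\int_X s_k(x,y)f(y)\,d\mu(y),\qquad \int_X s_k(x,y)\,d\mu(y)=1,
\end{equation*}
whose kernels $s_k$ inherit the H\"older regularity of the bumps and are localized to $d(x,y)\lesssim\delta^k$. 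The separation~\eqref{delta sparse} and the sandwich~\eqref{prop_cube4} supply the geometric control needed to sum over $\alpha$, and a careful bookkeeping of the quasi-triangle inequality~\eqref{quasi metric} across scales upgrades the localization of the kernels to the stretched-exponential factor in~\eqref{exponential decay}; its exponent $a=(1+2\log_2A_0)^{-1}$ degrades from the pure-exponential value $1$ precisely as the quasi-metric constant $A_0$ grows.

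Next I would assemble these into a multiresolution analysis: set $V_k$ to be the closed span of the scaling functions at level~$k$ and arrange the construction so that the spaces are \emph{nested}, $V_k\subset V_{k+1}$, with $\bigcup_k V_k$ dense in $L^2(X)$ and $\bigcap_k V_k=\{0\}$. Writing $W_k:=V_{k+1}\ominus V_k$ for the orthogonal complement, an orthonormal basis of each $W_k$ will serve as the wavelets at scale~$k$, and the dimension of $W_k$ over each region matches the number of new points $\mathscr{Y}^k=\mathscr{X}^{k+1}\setminus\mathscr{X}^k$, which explains the indexing of $\psi_\alpha^k$ by $y_\alpha^k\in\mathscr{Y}^k$. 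The conservation property $\int_X s_k(x,y)\,d\mu(y)=1$ forces every element of $W_k$ to have mean zero, giving the cancellation~\eqref{cancellation}, while the telescoping identity $\sum_k(S_{k+1}-S_k)=I$, read as a strong limit in $L^2$, together with the orthogonal decomposition $L^2(X)=\bigoplus_k W_k$, yields the reproducing formula~\eqref{reproducing formula}.

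It remains to produce inside each $W_k$ an orthonormal family that still obeys the size estimate~\eqref{exponential decay} and the regularity estimate~\eqref{Holder-regularity}. I would begin from the natural, finitely overlapping generators of $W_k$ attached to the points $y_\alpha^k$ and orthonormalize them. \textbf{This is the main obstacle:} a naive Gram--Schmidt procedure can spread out supports and destroy both the exponential decay and the H\"older bound. To control it I would prove an almost-orthogonality estimate showing that the Gram matrix of the generators is a bounded, boundedly invertible perturbation of the identity whose off-diagonal entries decay stretched-exponentially in $\delta^{-k}d(y_\alpha^k,y_\beta^k)$; this decay then transfers to the entries of the inverse square root of the Gram matrix by a Neumann-series (Combes--Thomas type) argument, so that the orthonormalized wavelets stay concentrated near their centers and retain the regularity of the generators. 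The cancellation~\eqref{cancellation} survives because it holds for \emph{every} member of $W_k$.

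A second delicate point, already used above, is the very existence of smooth \emph{nested} spaces $V_k\subset V_{k+1}$, since smooth averaging operators---unlike conditional expectations---need not satisfy $S_kS_{k+1}=S_k$. I would secure the nesting either by defining the scaling functions through a two-scale refinement relation solved by iteration at the level of kernels, or by constructing the $V_k$ as ranges of a compatible commuting family and only afterwards transferring the size and regularity estimates. Once nesting, density, the kernel bounds, and the almost-orthogonal Gram estimate are in hand, verifying~\eqref{exponential decay}--\eqref{reproducing formula} is routine bookkeeping.
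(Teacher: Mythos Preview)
The paper does not contain its own proof of this statement: Theorem~\ref{theorem AH orth basis} is quoted verbatim from Auscher and Hyt\"onen~\cite{AH} (their Theorem~7.1) and is used here purely as a black box, so there is no proof in the paper to compare against. Your sketch is therefore not being weighed against anything the authors wrote.

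That said, what you outline is broadly in the spirit of the actual Auscher--Hyt\"onen construction: they do build H\"older-regular ``spline'' functions attached to the reference points, arrange genuinely nested spaces $V_k\subset V_{k+1}$ via a careful randomized/iterative construction of the splines, identify the wavelet index set with $\mathscr{Y}^k=\mathscr{X}^{k+1}\setminus\mathscr{X}^k$, and then orthonormalize within $W_k=V_{k+1}\ominus V_k$ while propagating the sub-exponential decay through the inverse square root of a Gram-type matrix. You have correctly flagged the two genuine difficulties---securing the nesting $V_k\subset V_{k+1}$ for \emph{smooth} averaging operators, and showing that orthonormalization preserves the localization---and these are exactly where the bulk of the work in~\cite{AH} lies. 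Your proposal is a reasonable high-level roadmap, but each of those two steps is a substantial piece of analysis (the nesting in particular requires a delicate construction of the splines, not just ``solving a refinement relation by iteration''), so as written your sketch is closer to a table of contents for~\cite{AH} than to a self-contained proof.
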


\subsection{Spaces of test functions and distributions}

We refer the reader to~\cite{HLW}, Definitions~3.9 and~3.10 and
the surrounding discussion, for the definitions of the
space~$\GG$ of product test functions and its dual space
$(\GG)'$ of product distributions on the product space
$X_1\times X_2$. In \cite{HLW}, $\GG$ is denoted by
$\GG(\beta_1,\beta_2;\gamma_1,\gamma_2)$ and $(\GG)'$ is
denoted by $\GG(\beta_1,\beta_2;\gamma_1,\gamma_2)'$, where the
$\beta_i$ and $\gamma_i$ are parameters that quantify the size
and smoothness of the test functions, and $\beta_i \in
(0,\eta_i)$ where $\eta_i$ is the regularity exponent from
Theorem~\ref{theorem AH orth basis}. (In fact, in~\cite{HLW}
the theory is developed for $\beta_i \in (0,\eta_i]$, but for
simplicity here we only use $\beta_i \in (0,\eta_i)$ since that
is all we need.) We note that the one-parameter scaled
Auscher--Hyt\"onen wavelets $\psi_\alpha^k(x)/
\sqrt{\mu(B(y_\alpha^k,\delta^k))}$ are test functions, and
that their tensor products $\psi^{k_1}_{\alpha_1}(x)
\psi^{k_2}_{\alpha_2}(y) \big(\mu_1(B(y^{k_1}_{\alpha_1},
\delta_1^{k_1})) \mu_2(B(y^{k_2}_{\alpha_2},
\delta_2^{k_2}))\big)^{-1/2}$ are product test functions
in~$\GG$, for all $\beta_i \in (0,\eta_i]$ and all $\gamma_i >
0$, for $i = 1$,~2. These facts follow from the theory
in~\cite{HLW}, specifically Definition~3.1 and the discussion
after it, Theorem~3.3, and Definitions 3.9 and~3.10 and the
discussion between them.

We have the following version of the reproducing formula in the
product setting $X_1\times X_2$.

\begin{thm}[\cite{HLW}]\label{thm product reproducing formula test function}
    The reproducing formula
    \begin{eqnarray}\label{product reproducing formula}
       f(x_1,x_2)
       = \sum_{k_1}\sum_{\alpha_1 \in \mathscr{Y}^{k_1}}\sum_{k_2}\sum_{\alpha_2 \in \mathscr{Y}^{k_2}}
        \langle f,\psi_{\alpha_1}^{k_1}\psi_{\alpha_2}^{k_2} \rangle
        \psi_{\alpha_1}^{k_1}(x_1)\psi_{\alpha_2}^{k_2}(x_2)
    \end{eqnarray}
    holds in both
    $\GGp(\beta_{1},\beta_{2};\gamma_{1},\gamma_{2})$ and
    $(\GGp(\beta_{1},\beta_{2};\gamma_{1},\gamma_{2}))'$, with
    $0<\beta_i <\eta_i$ and $\gamma_i<\eta_i$ for $i=1$, $2$.
\end{thm}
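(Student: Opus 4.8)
The plan is to prove the product reproducing formula by iterating the one-parameter Auscher--Hyt\"onen expansion from Theorem~\ref{theorem AH orth basis} in each variable separately, and then to upgrade the resulting $L^2$ identity to convergence in the test-function space $\GGp(\beta_1,\beta_2;\gamma_1,\gamma_2)$ and its dual. First I would fix the second variable $x_2$ and apply the reproducing formula~\eqref{reproducing formula} in the first variable to the section $f(\cdot,x_2)$, obtaining
\begin{equation*}
    f(x_1,x_2)
    = \sum_{k_1}\sum_{\alpha_1 \in \mathscr{Y}^{k_1}}
        \Big\langle f(\cdot,x_2),\psi_{\alpha_1}^{k_1} \Big\rangle_{1}\,
        \psi_{\alpha_1}^{k_1}(x_1),
\end{equation*}
where $\langle\cdot,\cdot\rangle_1$ denotes pairing in the first variable. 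Applying~\eqref{reproducing formula} again, now in the second variable to each coefficient function $x_2\mapsto \langle f(\cdot,x_2),\psi_{\alpha_1}^{k_1}\rangle_1$, and using Fubini to identify the iterated inner product with the genuine product pairing $\langle f,\psi_{\alpha_1}^{k_1}\psi_{\alpha_2}^{k_2}\rangle$, produces the fourfold sum in~\eqref{product reproducing formula}. At the level of $L^2(X_1\times X_2)$ this is simply the statement that tensor products of the two orthonormal bases form an orthonormal basis of the tensor-product Hilbert space, so the identity holds in the $L^2$ sense with no further work.

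Next I would promote this to convergence in $\GGp$. The key inputs are the size, regularity and cancellation estimates~\eqref{exponential decay}--\eqref{cancellation} for the wavelets, which (as noted in the excerpt, following~\cite{HLW}, Definition~3.1 and Theorem~3.3) guarantee that the scaled products $\psi^{k_1}_{\alpha_1}\psi^{k_2}_{\alpha_2}$ are product test functions with uniform control in the $\GGp(\beta_1,\beta_2;\gamma_1,\gamma_2)$ norm. For a fixed test function $f\in\GGp$, I would estimate the coefficients $\langle f,\psi_{\alpha_1}^{k_1}\psi_{\alpha_2}^{k_2}\rangle$ by pairing the smoothness and cancellation of $f$ against those of the wavelets; the standard almost-orthogonality argument yields decay in $k_1$ and $k_2$ (gaining factors like $\delta_1^{|k_1|\epsilon}$ and $\delta_2^{|k_2|\epsilon}$ away from the critical scale, with spatial Gaussian-type decay in $\alpha_1,\alpha_2$ from~\eqref{exponential decay}). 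Summing these geometric bounds shows that the partial sums of~\eqref{product reproducing formula} converge in the $\GGp$ topology, matching the $L^2$ limit on the dense intersection $\GGp\cap L^2$.

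Finally, for the dual statement I would invoke duality directly: given $g\in(\GGp)'$, the convergence of the partial sums is tested against arbitrary $f\in\GGp$, and since $\langle g, \sum \langle f,\psi_{\alpha_1}^{k_1}\psi_{\alpha_2}^{k_2}\rangle\psi_{\alpha_1}^{k_1}\psi_{\alpha_2}^{k_2}\rangle$ converges by the $\GGp$-convergence just established together with the continuity of $g$, the reproducing formula holds in $(\GGp)'$ as well. I expect the main obstacle to be the almost-orthogonality estimate for the product coefficients and the careful bookkeeping of the four nested sums: one must show that the off-diagonal decay in the scale indices and the spatial decay in the center indices combine to give an absolutely (or at least summably) convergent series in the $\GGp$ norm, uniformly enough to interchange the two one-parameter expansions and to justify the Fubini step identifying the iterated and product pairings. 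All of these estimates are established in~\cite{HLW}, so the argument reduces to assembling them in the correct order rather than proving new inequalities.
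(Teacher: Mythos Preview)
The paper does not prove this theorem at all: it is stated with a citation to~\cite{HLW} and used as a black box, so there is no ``paper's own proof'' to compare your proposal against. Your sketch is a reasonable outline of how the argument in~\cite{HLW} proceeds---iterate the one-parameter Auscher--Hyt\"onen expansion, use almost-orthogonality to control the product coefficients, and pass to the dual by pairing---and you correctly flag that the substantive work (the coefficient decay estimates and the convergence in the $\GGp$ topology) is carried out in~\cite{HLW} rather than here.
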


We recall from~\cite{HLW} the definitions of the Hardy
space~$H^1(X_1\times X_2)$, the bounded mean oscillation
space~$\bmo(X_1\times X_2)$, and the vanishing mean oscillation
space~$\vmo(X_1\times X_2)$.


\begin{dfn}[\cite{HLW}]\label{def-Hp}
The {\it product Hardy space} $H^1$ is defined by
\[
    H^1(X_1\times X_2)
    :=\big\lbrace f \in (\GG)':
        S(f)\in L^1( X _1\times X _2)\big\rbrace,
\]
where $S(f)$ is the product Littlewood--Paley square function defined as
\begin{eqnarray}
    S(f)(x_1,x_2)
    :=\Big\{ \sum_{k_1}\sum_{\alpha_1\in\mathscr{Y}^{k_1}}
        \sum_{k_2}\sum_{\alpha_2\in\mathscr{Y}^{k_2}}
        \big| \langle \psi_{\alpha_1}^{k_1}\psi_{\alpha_2}^{k_2},f \rangle
        \widetilde{\chi}_{{Q}_{\alpha_1}^{k_1}}(x_1)
        \widetilde{\chi}_{{Q}_{\alpha_2}^{k_2}}(x_2)
        \big|^2 \Big\}^{1/2},
\end{eqnarray}
where $\widetilde{\chi}_{{ Q}_{\alpha_i}^{k_i}}(x_i):=\chi_{{
Q}_{\alpha_i}^{k_i}}(x_i)\mu_i({ Q}_{\alpha_i}^{k_i})^{-1/2}$
and $\chi_{{ Q}_{\alpha_i}^{k_i}}(x_i)$ is the indicator
function of the dyadic cube ${ Q}_{\alpha_i}^{k_i}$ for
$i=1,2$.

For $f\in H^1(X_1\times X_2)$, we define $\|f\|_{H^1(X_1\times
X_2)}:=\|S(f)\|_{L^1(X_1\times X_2)}$.
\end{dfn}


\begin{dfn}[\cite{HLW}]\label{def-CMO}
We define the \emph{product $\bmo$ space} as
$$
\bmo( X _1\times X _2):=\big\{ f \in (\GG)':
\mathcal{C}_1(f)< L^\infty\},
$$
with $\mathcal{C}_1(f)$ defined as follows:
\begin{eqnarray}\label{Carleson norm}
\mathcal{C}_1(f):=\sup_{\Omega}\Big\{ {1\over\mu(\Omega)}\sum_{k_1,k_2\in\mathbb{Z}, \alpha_1\in\mathscr{Y}^{k_1},
\alpha_2\in\mathscr{Y}^{k_2},R=Q_{\alpha_1}^{k_1}\times
Q_{\alpha_2}^{k_2}\subset \Omega} \big| \langle
\psi_{\alpha_1}^{k_1}\psi_{\alpha_2}^{k_2},f \rangle
 \big|^2 \Big\}^{1/2},
\end{eqnarray}
where $\Omega$ runs over all open sets in $ X _1\times X _2$ with
finite measures.
\end{dfn}

Now we introduce the following
\begin{dfn}[\cite{HLW}]\label{def-vmo}
We define the \emph{product vanishing mean oscillation space}
$\vmo(X_1\times X_2)$ as the subspace of $\bmo(X_1\times X_2)$
consisting of those $f\in \bmo(X_1\times X_2)$
 satisfying the three properties
\begin{eqnarray*}
&&{\rm (a)}\hskip.5cm \lim_{\delta\rightarrow 0}\ \sup_{\Omega:\
\mu(\Omega)<\delta}\Big\{
{1\over\mu(\Omega)}\sum_{k_1,k_2\in\mathbb{Z},
\alpha_1\in\mathscr{Y}^{k_1},
\alpha_2\in\mathscr{Y}^{k_2},R=Q_{\alpha_1}^{k_1}\times
Q_{\alpha_2}^{k_2}\subset \Omega} \big| \langle
\psi_{\alpha_1}^{k_1}\psi_{\alpha_2}^{k_2},f \rangle
 \big|^2 \Big\}^{1/2}=0;\\
&&{\rm (b)}\hskip.5cm \lim_{N\rightarrow \infty}\ \sup_{\Omega:\
\diam(\Omega)>N}\Big\{
{1\over\mu(\Omega)}\sum_{k_1,k_2\in\mathbb{Z},
\alpha_1\in\mathscr{Y}^{k_1},
\alpha_2\in\mathscr{Y}^{k_2},R=Q_{\alpha_1}^{k_1}\times
Q_{\alpha_2}^{k_2}\subset \Omega} \big| \langle
\psi_{\alpha_1}^{k_1}\psi_{\alpha_2}^{k_2},f \rangle
 \big|^2 \Big\}^{1/2}=0;\\
&&{\rm (c)}\hskip.5cm \lim_{N\rightarrow \infty}\ \sup_{\Omega:\
\Omega \subset ( B(x_1,N)\times B(x_2,N))^c }\\
&&\hskip3cm\Big\{ {1\over\mu(\Omega)}\sum_{k_1,k_2\in\mathbb{Z},
\alpha_1\in\mathscr{Y}^{k_1},
\alpha_2\in\mathscr{Y}^{k_2},R=Q_{\alpha_1}^{k_1}\times
Q_{\alpha_2}^{k_2}\subset \Omega} \big| \langle
\psi_{\alpha_1}^{k_1}\psi_{\alpha_2}^{k_2},f \rangle
 \big|^2 \Big\}^{1/2}=0,\ {\rm where\ } \\
&&\hskip1.15cm x_1\ {\rm and}\ x_2\ {\rm are\ any\ fixed\ points\
in\ } X_1\ {\rm and}\ X_2 {\rm ,\ respectively.}
\end{eqnarray*}
\end{dfn}


\begin{thm}[\cite{HLW}]\label{thm-duality 2}
    The following duality results hold:
    \begin{align*}
        \big(H^1( X _1\times X _2)\big)'
        &= \bmo( X _1\times X _2), \\
        \big(\vmo( X _1\times X _2)\big)'
        &=  H^1( X _1\times X _2). \\
    \end{align*}
\end{thm}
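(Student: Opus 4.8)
The engine of the proof is the wavelet reproducing formula (Theorem~\ref{thm product reproducing formula test function}), which lets me identify a distribution $f$ with its sequence of wavelet coefficients $s_R(f):=\langle f,\psi_R\rangle$, where $R=Q_{\alpha_1}^{k_1}\times Q_{\alpha_2}^{k_2}$ ranges over dyadic rectangles and $\psi_R:=\psi_{\alpha_1}^{k_1}\psi_{\alpha_2}^{k_2}$. Under this identification $S(f)^2=\sum_R|s_R(f)|^2\,\chi_R/\mu(R)$, so that $\|f\|_{H^1}=\|S(f)\|_{L^1}$ and the Carleson quantity $\mathcal{C}_1(f)$ become purely sequence-space expressions, while $\sum_R|s_R(f)|^2=\|f\|_{L^2}^2$ by orthonormality. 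The plan is to reduce both duality statements to the corresponding sequence (tent) space duality and then transfer back through the reproducing formula. Two preliminary facts I would record are that each $\psi_R$ lies in $H^1$ with $\|\psi_R\|_{H^1}\lesssim1$, and that finite linear combinations of the $\psi_R$ are dense in $H^1$ and in $\vmo$; these let any continuous functional be recovered from its values on the $\psi_R$, and they legitimise the truncation arguments below.

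For $(H^1)'=\bmo$ the substantial inclusion is $\bmo\hookrightarrow(H^1)'$. Given $g\in\bmo$ and a finite wavelet sum $f$, I would expand $\langle f,g\rangle=\sum_R s_R(f)\overline{s_R(g)}$ and sort the rectangles by the level sets $\Om^j:=\{S(f)>2^j\}$. The decisive device is to replace $\Om^j$ by an enlarged set $\widetilde{\Om}^j$ defined through the strong (product) maximal function, arranged so that $\mu(\widetilde{\Om}^j)\lesssim\mu(\Om^j)$ while every rectangle contributing at level $j$ is contained in $\widetilde{\Om}^j$; this is precisely where a Journ\'e-type covering lemma on the product space enters. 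On each level Cauchy--Schwarz splits the sum into an $f$-factor bounded by $\int_{\Om^j\setminus\Om^{j+1}}S(f)^2\,d\mu$ and a $g$-factor bounded by $\|g\|_{\bmo}^2\,\mu(\widetilde{\Om}^j)$ straight from Definition~\ref{def-CMO}; summing over $j$ yields $|\langle f,g\rangle|\lesssim\|f\|_{H^1}\|g\|_{\bmo}$. For the reverse inclusion, given $L\in(H^1)'$ I would define $g$ by its coefficients $s_R(g):=\overline{L(\psi_R)}$ and, for each open set $\Om$ of finite measure, test $L$ against $f_\Om:=\sum_{R\subset\Om}s_R(g)\,\psi_R$ (working first with finite sub-collections of rectangles). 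Since $\|f_\Om\|_{H^1}\le\mu(\Om)^{1/2}\big(\sum_{R\subset\Om}|s_R(g)|^2\big)^{1/2}$ while $L(f_\Om)=\sum_{R\subset\Om}|s_R(g)|^2$, the estimate $|L(f_\Om)|\le\|L\|\,\|f_\Om\|_{H^1}$ self-improves to $\sum_{R\subset\Om}|s_R(g)|^2\le\|L\|^2\mu(\Om)$, which is exactly the Carleson bound $\mathcal{C}_1(g)\le\|L\|$.

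The duality $(\vmo)'=H^1$ I would deduce from the first. The inclusion $H^1\subseteq(\vmo)'$ is immediate by restricting the $H^1$--$\bmo$ pairing to $\vmo\subseteq\bmo$. For surjectivity, the three conditions (a), (b), (c) of Definition~\ref{def-vmo} are exactly what guarantees that finite wavelet sums are dense in $\vmo$: (a) controls sets of small measure, (b) sets of large diameter, and (c) sets escaping to spatial infinity. Thus a functional $\Lambda\in(\vmo)'$ is determined by $s_R:=\Lambda(\psi_R)$; set $f:=\sum_R s_R\psi_R$. To see $f\in H^1$, truncate to a finite sum $f_N$: by the first duality $\|f_N\|_{H^1}\asymp\sup\{|\langle f_N,g\rangle|:\|g\|_{\bmo}\le1\}$, and since $\langle f_N,g\rangle$ involves only finitely many coefficients of $g$ it is computed by the action of $\Lambda$ on a finite wavelet sum $g'$ with $\|g'\|_{\bmo}\le\|g\|_{\bmo}$ (dropping coefficients only shrinks the Carleson sums), so $|\langle f_N,g\rangle|\le\|\Lambda\|\,\|g\|_{\bmo}$. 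Hence $\|f_N\|_{H^1}\lesssim\|\Lambda\|$ uniformly in $N$, and Fatou applied to $S(f_N)\to S(f)$ gives $f\in H^1$ with $\|f\|_{H^1}\lesssim\|\Lambda\|$; since $f$ agrees with $\Lambda$ on finite wavelet sums, it represents $\Lambda$ on all of $\vmo$ by density.

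The step I expect to be the main obstacle is the enlargement $\Om^j\rightsquigarrow\widetilde{\Om}^j$ by the strong maximal function in the proof that $\bmo\hookrightarrow(H^1)'$. In the product setting the Carleson condition defining $\bmo$ is a supremum over arbitrary open sets rather than over single rectangles, and a rectangle can lie inside a level set $\Om^j$ while carrying most of its mass in $\Om^{j+1}$; disentangling this needs the Journ\'e covering lemma together with the $L^2$-boundedness of the strong maximal function, adapted to the dyadic grid of the space of homogeneous type. Carrying out these product geometric estimates with no translation or dilation structure available, relying solely on the Auscher--Hyt\"onen wavelet bounds \eqref{exponential decay}--\eqref{cancellation}, is the most delicate part of the argument.
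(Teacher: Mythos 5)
You should first be aware that this paper contains no proof of Theorem~\ref{thm-duality 2} to compare against: the theorem is imported, with attribution, from the reference~\cite{HLW}, where both dualities are established as part of the wavelet-based development of $H^1$, $\bmo$ and $\vmo$ on product spaces of homogeneous type. Measured against the argument of~\cite{HLW} (which follows the Chang--Fefferman scheme~\cite{CF1} and, for the second duality, the scheme of~\cite{LTW}), your outline reconstructs essentially the same route: the reproducing formula of Theorem~\ref{thm product reproducing formula test function} reduces everything to coefficient sequences; $\bmo\hookrightarrow (H^1)'$ goes by the stopping argument on the level sets $\Om^j=\{S(f)>2^j\}$ with the enlargement $\widetilde{\Om}^j=\{M_s\chi_{\Om^j}>1/2\}$; the converse inclusion is your self-improving test of $L$ against $f_\Om=\sum_{R\subset\Om}s_R(g)\psi_R$, which produces exactly the Carleson bound of Definition~\ref{def-CMO}; and $(\vmo)'=H^1$ follows from density of finite wavelet sums in $\vmo$ (this is what conditions (a)--(c) of Definition~\ref{def-vmo} encode), uniform $H^1$ bounds on truncations via the first duality, and Fatou applied to $S(f_N)$. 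In structure and in all the key devices your proposal matches the source proof.

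Two corrections of detail. First, your claim that the enlargement $\Om^j\rightsquigarrow\widetilde{\Om}^j$ ``is precisely where a Journ\'e-type covering lemma enters'' is a misattribution. Because Definition~\ref{def-CMO} takes the Carleson supremum over \emph{arbitrary open sets} rather than over single rectangles, the duality proof needs only two facts: that every rectangle assigned to level $j$ (say, with $\mu(R\cap\Om^j)>\mu(R)/2$ and $\mu(R\cap\Om^{j+1})\le\mu(R)/2$) is contained in the open set $\widetilde{\Om}^j$, and that $\mu(\widetilde{\Om}^j)\lesssim\mu(\Om^j)$, which is simply the $L^2$-boundedness of the strong maximal function (obtained by iterating the one-parameter maximal operators, available on any product of spaces of homogeneous type). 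Journ\'e's covering lemma is what one needs for rectangle-atom questions, not here; indeed the open-set formulation of product $\bmo$ was designed to bypass it. Second, in the same stopping argument the $f$-factor must be estimated as $\sum_{R\ \mathrm{at\ level}\ j}|s_R(f)|^2\lesssim\int_{\widetilde{\Om}^j\setminus\Om^{j+1}}S(f)^2\,d\mu\lesssim 2^{2j}\mu(\Om^j)$, using that $R\subset\widetilde{\Om}^j$ while $\mu(R\setminus\Om^{j+1})\ge\mu(R)/2$; your version with $\int_{\Om^j\setminus\Om^{j+1}}S(f)^2\,d\mu$ does not follow as stated, since a level-$j$ rectangle need not meet $\Om^j\setminus\Om^{j+1}$ in a fixed fraction of its measure. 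Both points are repairs of detail rather than of strategy; with them your sketch is the proof of~\cite{HLW}.
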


\section{Proof of Theorem \ref{thm:1.1} for one parameter}\label{sec-thm1.1-one-parameter}
\setcounter{equation}{0}

To prove Theorem~\ref{thm:1.1}, paralleling the Euclidean
one-parameter case, we will make use of several properties of
the $A_p$ classes on spaces of homogeneous type. These
properties are collected in Lemma~\ref{ap}, Theorem~\ref{w1},
and Lemmas~\ref{a1}--\ref{a2} below.

Let $(X,d,\mu)$ be a space of homogeneous type. A nonnegative
locally integrable function $\omega : X \to \mathbb{R}$ is said
to belong to $A_p(X)$, $1<p<\infty$, if
\[
    \sup_{B}\bigg(\frac 1{\mu(B)}\int_{B} \omega(x) \, d\mu(x)\bigg)
         \bigg(\frac 1{\mu(B)}\int_{B} \omega(x)^{-\frac 1{p-1}} \, d\mu(x)\bigg)^{p-1}
    < \infty,
\]
and $\omega$ is said to belong to $A_1(X)$ if
\[
    \sup_{B}\bigg(\frac 1{\mu(B)}\int_{B} \omega(x) \, d\mu(x)\bigg)
    \Big(\operatornamewithlimits{ess\,sup}_{x\in B} \omega^{-1}(x) \Big)
    < \infty.
\]

\begin{lem}[\cite{C2} Lemma 4]\label{ap}
    Let $\omega\in A_p$, $1\le p<\infty$. There exists a
    constant $C>0$ such that, for any subset $E$ of $B$,
    \[
        \bigg(\frac{\mu(E)}{\mu(B)}\bigg)^p
        \le C\,\frac{\int_E \omega(x) \, d\mu(x)}{\int_B \omega(x) \, d\mu(x)}\,.
    \]
\end{lem}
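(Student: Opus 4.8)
The plan is to prove the standard $A_p$ reverse-type inequality by separating the cases $p=1$ and $1<p<\infty$, in each case starting from the defining $A_p$ condition and invoking H\"older's inequality appropriately. The key observation is that we want to bound $\mu(E)/\mu(B)$ in terms of the $\omega$-measures $\int_E\omega\,d\mu$ and $\int_B\omega\,d\mu$, so the natural strategy is to write $\mu(E)=\int_E 1\,d\mu$ and split $1=\omega^{1/p}\omega^{-1/p}$ so that H\"older produces an $\omega$-weighted piece and a complementary $\omega^{-1/(p-1)}$ piece that the $A_p$ condition controls.

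First, for the case $1<p<\infty$, I would estimate
\begin{equation*}
\mu(E)=\int_E \omega(x)^{1/p}\,\omega(x)^{-1/p}\,d\mu(x)
\le \Big(\int_E\omega(x)\,d\mu(x)\Big)^{1/p}
\Big(\int_E\omega(x)^{-1/(p-1)}\,d\mu(x)\Big)^{(p-1)/p},
\end{equation*}
using H\"older's inequality with exponents $p$ and $p/(p-1)$. Since $E\subset B$, the second factor is bounded by the corresponding integral over $B$. Raising both sides to the power $p$ and then applying the $A_p$ condition, which gives
\begin{equation*}
\Big(\int_B\omega(x)^{-1/(p-1)}\,d\mu(x)\Big)^{p-1}
\le C\,\mu(B)^{p}\Big(\int_B\omega(x)\,d\mu(x)\Big)^{-1},
\end{equation*}
I would collect the terms to arrive at $\mu(E)^p\le C\,\mu(B)^p\,\big(\int_E\omega\,d\mu\big)\big(\int_B\omega\,d\mu\big)^{-1}$, which is the desired bound after dividing by $\mu(B)^p$.

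Second, for $p=1$, the argument is more direct: the $A_1$ condition states that $\frac{1}{\mu(B)}\int_B\omega\,d\mu\le C\,\omega(x)$ for $\mu$-almost every $x\in B$, equivalently the essential-supremum formulation given above. I would write $\int_E\omega\,d\mu\ge \big(\operatorname*{ess\,inf}_{x\in B}\omega(x)\big)\mu(E)$ and use the $A_1$ inequality to bound the mean of $\omega$ over $B$ by this essential infimum, yielding $\mu(E)/\mu(B)\le C\,\big(\int_E\omega\,d\mu\big)\big(\int_B\omega\,d\mu\big)^{-1}$, which matches the claimed estimate with exponent $p=1$.

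I do not expect any genuine obstacle here, since this is a classical and purely measure-theoretic consequence of the $A_p$ definition; the only point requiring care is the bookkeeping of H\"older exponents and ensuring the local integrability of $\omega$ and $\omega^{-1/(p-1)}$ guarantees all integrals are finite, so that the manipulations are legitimate. The homogeneous-type setting plays no special role, as the $A_p$ condition and H\"older's inequality hold verbatim with respect to the doubling measure $\mu$ and the quasi-metric balls $B$.
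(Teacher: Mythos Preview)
Your proof is correct and is the standard argument for this classical $A_p$ inequality. Note, however, that the paper does not actually prove this lemma: it is quoted directly from Calder\'on~[C2, Lemma~4] without proof, so there is no ``paper's own proof'' to compare against. Your argument via H\"older's inequality (splitting $1=\omega^{1/p}\omega^{-1/p}$) for $1<p<\infty$ and the direct essential-infimum bound for $p=1$ is exactly the canonical route, and it goes through verbatim in the homogeneous-type setting since only the $A_p$ definition and H\"older's inequality are used.
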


The centered Hardy--Littlewood maximal operator $M$ with
respect to the measure $\mu$ is defined by
$$Mf(x) := \sup_{r>0}\frac 1{\mu(B(x,r))}\int_{B(x,r)}|f(y)|\,d\mu(y).$$

\begin{thm}[\cite{C2} Theorem 3]\label{w1}
    If $\omega\in A_1$, then $M$ is of $\omega$-weak type
    $(1,1)$ with respect to $\mu$; that is, there exists a
    constant $C>0$ such that, for all $\lambda>0$ and all $f\in
    L^1_\omega(d\mu)$,
    \[
        \int_{\{x\in X: M f(x) > \lambda\}} \omega(x) \, d\mu(x)
        \le \frac C\lambda\int_X |f(x)|\omega(x) \, d\mu(x).
    \]
\end{thm}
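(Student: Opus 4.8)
The plan is to adapt the classical Calderón--Zygmund proof of the weak-type $(1,1)$ bound for the maximal function, replacing the measure $\mu$ by the weighted measure $\omega\,d\mu$ and using the $A_1$ condition to control the weight on dilated balls. First I would fix $\lambda>0$ and set $E_\lambda:=\{x\in X: Mf(x)>\lambda\}$. For each $x\in E_\lambda$ the definition of the centered maximal operator furnishes a ball $B_x=B(x,r_x)$ with $\mu(B_x)<\lambda^{-1}\int_{B_x}|f|\,d\mu$. Since the radii $r_x$ may be unbounded, I would first truncate, working with the maximal operator restricted to balls of radius at most $R$ and obtaining a constant independent of $R$, and then let $R\to\infty$ by monotone convergence; I suppress this technicality below.

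Next I would invoke a Vitali-type covering lemma valid in the quasi-metric setting: from the family $\{B_x\}_{x\in E_\lambda}$ (with radii now bounded) one extracts a countable pairwise-disjoint subfamily $\{B_j=B(x_j,r_j)\}_j$ such that $E_\lambda\subset\bigcup_j\widetilde B_j$, where $\widetilde B_j:=B(x_j,Kr_j)$ and the dilation constant $K$ depends only on the quasi-triangle constant $A_0$. Hence, writing $\omega(E):=\int_E\omega\,d\mu$, we obtain $\omega(E_\lambda)\le\sum_j\omega(\widetilde B_j)$.

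The heart of the argument is the estimate of each $\omega(\widetilde B_j)$. Let $m_j:=\operatorname{ess\,inf}_{x\in B_j}\omega(x)$. Applying the $A_1$ condition to $\widetilde B_j$ (in the form $\omega(\widetilde B_j)\le C\,\mu(\widetilde B_j)\operatorname{ess\,inf}_{x\in\widetilde B_j}\omega$) and noting that $B_j\subset\widetilde B_j$ forces $\operatorname{ess\,inf}_{x\in\widetilde B_j}\omega\le m_j$, we get $\omega(\widetilde B_j)\le C\,\mu(\widetilde B_j)\,m_j$, and the doubling condition~\eqref{doubling condition} lets me replace $\mu(\widetilde B_j)$ by $C\,\mu(B_j)$. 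Combining the selection inequality $\mu(B_j)<\lambda^{-1}\int_{B_j}|f|\,d\mu$ with the pointwise bound $m_j\le\omega(x)$ for $\mu$-almost every $x\in B_j$ yields
\[
    \omega(\widetilde B_j)\le\frac{C}{\lambda}\,m_j\int_{B_j}|f|\,d\mu
    \le\frac{C}{\lambda}\int_{B_j}|f(x)|\,\omega(x)\,d\mu(x).
\]
Finally, summing over $j$ and using the pairwise disjointness of the $B_j$ gives $\omega(E_\lambda)\le\frac{C}{\lambda}\int_X|f|\,\omega\,d\mu$, which is the claimed bound.

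I expect the main obstacle to be the covering step rather than the weighted estimates. One must ensure that the Vitali lemma holds in a space carrying only the quasi-triangle inequality~\eqref{quasi metric}, track how the dilation constant $K$ depends on $A_0$, and justify the reduction to bounded radii via the truncation argument; the measurability hypotheses (4)--(5) are what guarantee the sets and balls involved behave well. Once the covering is in hand, the remainder is a routine chaining of the $A_1$ inequality with the doubling property~\eqref{doubling condition}.
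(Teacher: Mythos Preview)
The paper does not supply its own proof of this theorem: it is quoted verbatim as Theorem~3 of Calder\'on~\cite{C2} and used as a black box. Your argument is the standard Vitali-covering proof of the weighted weak $(1,1)$ inequality and is correct as written; the only subtleties you flag (existence of the covering lemma under the quasi-triangle inequality~\eqref{quasi metric}, and the truncation to bounded radii) are handled in the Coifman--Weiss/Calder\'on framework invoked here, so there is nothing to add.
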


Similarly, the uncentered Hardy--Littlewood maximal operator
$M$ with respect to the measure~$\mu$ is defined by
$$\widetilde Mf(x) := \sup_{x\in B}\frac 1{\mu(B)}\int_B|f(y)|\,d\mu(y).$$

\begin{lem}\label{a1}
    The weight $\omega\in A_1$ if and only if there is a constant $C>0$ such that
    $$M\omega(x)\le C\omega(x)\qquad \mu\text{-almost everywhere}\ x\in X. $$
\end{lem}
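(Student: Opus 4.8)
The plan is to prove the standard equivalence characterising the $A_1$ condition through the maximal operator.

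First I would establish the forward direction, assuming $\omega \in A_1$. The $A_1$ condition as stated gives a constant $C_0$ so that for every ball $B$,
\[
    \frac{1}{\mu(B)}\int_B \omega(y)\,d\mu(y)
    \le C_0 \operatornamewithlimits{ess\,inf}_{y\in B}\omega(y).
\]
Here I am using that $\bigl(\operatornamewithlimits{ess\,sup}_{y\in B}\omega^{-1}\bigr)^{-1} = \operatornamewithlimits{ess\,inf}_{y\in B}\omega$. Now fix a point $x \in X$ and any $r > 0$. Since $x \in B(x,r)$, the essential infimum over $B(x,r)$ is at most $\omega(x)$ at $\mu$-almost every $x$; more precisely, the set of $x$ failing the inequality $\operatornamewithlimits{ess\,inf}_{y\in B}\omega \le \omega(x)$ for a fixed ball has measure zero, and one takes a countable family of balls (for instance with rational radii, or those arising from the doubling structure) to conclude the bound holds $\mu$-almost everywhere simultaneously. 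Taking the supremum over $r > 0$ then yields $M\omega(x) \le C_0\,\omega(x)$ for $\mu$-almost every $x$.

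For the converse direction I would assume $M\omega(x) \le C\omega(x)$ $\mu$-almost everywhere and verify the $A_1$ condition. Fix a ball $B = B(x,r)$. For $\mu$-almost every $y \in B$, the point $y$ lies in the ball $B$ itself, so the average of $\omega$ over $B$ is bounded by $M\omega(y) \le C\omega(y)$; here I use that $B$ is one of the balls competing in the centred maximal function at $y$, at least after passing to a slightly larger concentric ball if $y$ is not the centre, which is controlled using the doubling condition~\eqref{doubling condition} and the quasi-triangle inequality~\eqref{quasi metric}. Taking the essential infimum over $y \in B$ gives
\[
    \frac{1}{\mu(B)}\int_B \omega(y)\,d\mu(y)
    \le C \operatornamewithlimits{ess\,inf}_{y\in B}\omega(y),
\]
which is exactly the $A_1$ inequality, with supremum over all balls $B$ finite.

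The main technical obstacle is the interaction between the \emph{centred} maximal operator $M$ and the off-centre points needed in the converse direction: when $y \in B(x,r)$ is not the centre, the ball $B(x,r)$ need not appear in the supremum defining $M\omega(y)$, so I must replace it by a concentric ball $B(y,R)$ with $R$ comparable to $r$ that contains $B(x,r)$, invoking the quasi-triangle constant $A_0$ and then absorbing the resulting factor via doubling. Keeping careful track of these geometric constants—so that the final $A_1$ constant depends only on $C$, $A_0$, and $C_\mu$—is where the care is required, whereas the measure-theoretic passage to almost-everywhere statements via a countable collection of balls is routine in a space of homogeneous type.
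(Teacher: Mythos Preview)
Your proposal is correct, and for the implication $M\omega\le C\omega \Rightarrow \omega\in A_1$ it is essentially the same as the paper's: both arguments amount to passing from the centred operator~$M$ to the uncentred operator~$\widetilde M$ via the quasi-triangle inequality and doubling, and then reading off the $A_1$ inequality directly from $\widetilde M\omega(x)\le C'\omega(x)$.

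The genuine difference is in the implication $\omega\in A_1 \Rightarrow M\omega\le C\omega$. You argue directly from the definition: the $A_1$ bound gives $\mu(B)^{-1}\int_B\omega\le C_0\,\operatorname*{ess\,inf}_B\omega$, and since $x\in B(x,r)$ the right side is $\le C_0\omega(x)$ for $\mu$-a.e.~$x$, after which one takes the supremum over~$r$. The paper instead invokes Theorem~\ref{w1} (Calder\'on's $\omega$-weighted weak-type $(1,1)$ inequality for~$M$), applies it to $f=\chi_{B_1}$ for nested balls $B_1\subset B_2$, and finishes with Lebesgue differentiation. Your route is more elementary and self-contained, avoiding the external weighted weak-type result entirely; the paper's route, while heavier, has the advantage that the passage to ``$\mu$-almost everywhere simultaneously over all balls'' is absorbed into the Lebesgue differentiation step. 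In your argument that passage needs a word of care: the balls $B(x,r)$ vary with~$x$, so the countable-family reduction should be phrased either via a countable dense set of centres and radii together with doubling, or (cleanest) by noting that $r\mapsto\operatorname*{ess\,inf}_{B(x,r)}\omega$ is non-increasing and is bounded above by the average $\mu(B(x,r))^{-1}\int_{B(x,r)}\omega$, which tends to $\omega(x)$ for a.e.~$x$ by Lebesgue differentiation.
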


\begin{proof}
Suppose that there is a constant $C>0$ such that $\widetilde
M\omega(x)\le C\omega(x)$ $\mu$-almost everywhere. Since
$\widetilde M\omega(x)$ is equivalent to $M\omega(x)$, it is
clear that
$$\frac 1{\mu(B)}\int_B \omega(y)\,d\mu(y) \le C\omega(x)\qquad \mu\text{-almost everywhere}\ x\in B.$$
Hence $\omega\in A_1$. Conversely, Theorem \ref{w1} shows that
there exists $C>0$ such that, for any $\lambda>0$ and $f\in
L^1_\omega$,
$$\int_{\{x\in X:\widetilde  M f(x)>\lambda\}}\omega(x) \, d\mu(x)\le \frac{C}{\lambda}\int_X |f(x)|\omega(x)\,d\mu(x).$$
Suppose $x\in B_1\subset B_2$. Let $f=\chi_{B_1}$ and $z\in
B_2$. Then
$$\widetilde Mf(z)\ge \frac 1{\mu(B_2)}\int_{B_2}f(y)\,d\mu(y)=\frac {\mu(B_1)}{\mu(B_2)}.$$
The above inequality shows that $B_2\subset \{x: \widetilde
Mf(x)\ge \mu(B_1)/\mu(B_2)\}$. Hence,
\begin{align*}
    \int_{B_2} \omega(x)\,d\mu(x)
    &\le \int_{ \{x: \widetilde Mf(x)\ge \mu(B_1)/\mu(B_2)\}} \omega(x)\,d\mu(x) \\
    &\le C \frac {\mu(B_2)}{\mu(B_1)} \int_{B_1}\omega(x)\,d\mu(x).
\end{align*}
By Lebesgue's differentiation theorem, the Lemma follows.
\end{proof}

We will need the following generalization to spaces of
homogeneous type of one direction of a well-known result of
Coifman and Rochberg~\cite{CR}.

\begin{lem}\label{max}
    Let $f\in L^1_{\text{\rm loc}}(X)$ such that $Mf(x)<\infty$ $\mu$-almost everywhere.
    Then $\big(Mf\big)^\delta\in A_1$ for $0\le \delta<1$.
\end{lem}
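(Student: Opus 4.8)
The plan is to deduce the $A_1$ condition for the weight $\omega := (Mf)^\delta$ from the pointwise bound $\widetilde M\omega(x) \le C\,\omega(x)$ for $\mu$-almost every $x$, which by Lemma~\ref{a1} (together with the equivalence of $\widetilde M$ and $M$) is exactly membership in $A_1$. Fix a point $x'$ and a quasi-metric ball $B = B(x_0,r)$ with $x'\in B$; it suffices to bound the average of $\omega$ over $B$ by $C\,(Mf(x'))^\delta$ uniformly. To this end I would split $f = f_1 + f_2$ with $f_1 := f\chi_{\lambda B}$ and $f_2 := f\chi_{(\lambda B)^c}$, where $\lambda := 2A_0$ is chosen large enough, relative to the quasi-triangle constant, that balls centered in $B$ must be fairly large before they can reach the support of $f_2$. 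Using $Mf \le Mf_1 + Mf_2$ and the elementary inequality $(a+b)^\delta \le a^\delta + b^\delta$ valid for $0\le\delta\le 1$, the problem reduces to estimating the averages of $(Mf_1)^\delta$ and $(Mf_2)^\delta$ over $B$ separately.

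For the local part I would apply Kolmogorov's inequality. Taking $\omega\equiv 1 \in A_1$ in Theorem~\ref{w1} shows that $M$, and hence $\widetilde M$, is of weak type $(1,1)$ with respect to $\mu$; Kolmogorov's inequality then yields
\[
   \frac{1}{\mu(B)}\int_B (Mf_1)^\delta \, d\mu
   \le C\Big(\frac{1}{\mu(B)}\int_{\lambda B}|f|\,d\mu\Big)^\delta.
\]
The doubling condition~\eqref{doubling condition} bounds $\mu(\lambda B)$ by $C\mu(B)$ through finitely many doublings, so the right-hand side is at most $C\big(\tfrac{1}{\mu(\lambda B)}\int_{\lambda B}|f|\,d\mu\big)^\delta \le C\,(Mf(x'))^\delta$, since $\lambda B$ is a ball containing $x'$.

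For the global part I would show that $Mf_2$ is essentially constant on $B$, namely $Mf_2(x)\le C\,Mf(x')$ for all $x,x'\in B$. If a ball $B(x,s)$ with $x\in B$ meets $(\lambda B)^c$, then the quasi-triangle inequality~\eqref{quasi metric} forces $s \ge c\,r$ for a constant $c>0$ depending on $A_0$; consequently $B(x,s) \subset B(x', C_2 s)$ and, again by~\eqref{quasi metric}, $B(x',C_2 s)\subset B(x, C_3 s)$, so the doubling condition gives $\mu(B(x',C_2 s))\le C\,\mu(B(x,s))$. Averaging $|f|$ over the larger ball then shows that every average of $|f_2|$ over a ball centered in $B$ is controlled by $C\,\widetilde Mf(x')\le C\,Mf(x')$, whence the claim. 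It follows that $\tfrac{1}{\mu(B)}\int_B (Mf_2)^\delta\,d\mu \le C\,(Mf(x'))^\delta$ as well.

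Combining the two parts gives $\tfrac{1}{\mu(B)}\int_B\omega\,d\mu\le C\,(Mf(x'))^\delta$ for $\mu$-almost every $x'\in B$; taking the supremum over all balls $B\ni x'$ produces the desired bound $\widetilde M\omega(x')\le C\,\omega(x')$, and local integrability of $\omega$ is read off from the same estimate. The main obstacle I anticipate is purely geometric: because $d$ is only a quasi-metric, the nesting and comparison of the balls $B(x,s)$, $B(x',C_2 s)$ and $\lambda B$ must be tracked carefully through the constant $A_0$, and the choice $\lambda = 2A_0$ is what guarantees that balls reaching the support of $f_2$ are large enough for the doubling comparisons to close.
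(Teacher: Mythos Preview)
Your proposal is correct and follows essentially the same route as the paper's proof: reduce to the pointwise $A_1$ characterization via Lemma~\ref{a1}, split $f$ into a local piece on a dilate of $B$ and a far piece, handle the local part by Kolmogorov's inequality together with the weak $(1,1)$ bound for the maximal function, and control the far part by showing $Mf_2$ is essentially constant on $B$ via quasi-triangle and doubling comparisons. The only cosmetic differences are your choice of dilation factor $\lambda = 2A_0$ versus the paper's $\lambda = 2$, and that the paper carries the argument out with the uncentered maximal function $\widetilde M$ throughout rather than passing between $M$ and $\widetilde M$.
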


\begin{proof}
By Lemma~\ref{a1}, it suffice to show that there exists a
constant $C$ such that, for any $B$ and $\mu$-almost every
$x\in B$,
$$\frac 1{\mu(B)}\int_B \big(\widetilde Mf\big)^\delta \, d\mu\le C \big(\widetilde Mf(x)\big)^\delta.$$
Fix $B=B(x_0,t_0)$ and decompose $f$ as $f=f_1+f_2$, where
$f_1=f\chi_{_{2B}}$ and $f_2=f\chi_{_{(2B)^c}}$ with
$2B=B(x_0,2t_0)$. Then $\widetilde Mf(y)\le \widetilde
Mf_1(y)+\widetilde Mf_2(y)$ and
$$\big(\widetilde Mf(y)\big)^\delta\le \big(\widetilde Mf_1(y)\big)^\delta
        + \big(\widetilde Mf_2(y)\big)^\delta\qquad\text{for}\ \ 0\le\delta<1.$$ Since
$\widetilde M$ is weak (1,1) with respect to the measure $\mu$,
Kolmogorov's inequality shows that
$$\frac 1{\mu(B)}\int_B \big(\widetilde Mf_1(y)\big)^\delta \, d\mu(y)\le \frac{C}{\mu(B)}\mu(B)^{1-\delta}\|f_1\|_{L^1}^\delta
    \le C\Big(\frac 1{\mu(B)}\int_{2B} f \, d\mu\Big)^\delta\le C \big(\widetilde M f(x)\big)^\delta.$$

Now we estimate $\widetilde Mf_2$. Given $y\in B$, for any
$B(y_0,R)$ that contains $y$, we have $B\subset
B(y_0,A_0^2\max\{t_0,R\})$. If $R<t_0$, we have $B(y_0,t_0)\cap
B(x_0,t_0)\neq \varnothing$ and hence $B(y_0,t_0) \subset
B(x_0,A_0^2 t_0)$ which gives $B(y_0,\frac{t_0}{2A_0^2})\subset
B(x_0, 2t_0)$. Then the inequality $\int_{B(y_0,R)} |f_2| \,
d\mu > 0$ implies $R>\frac{t_0}{2A_0^2}$ that concludes
$B\subset B(y_0,2A_0^4R)$ when $R<t_0$. It is clear that
$B\subset B(y_0,2A_0^4R)$ when $R\ge t_0$. Thus,
\[
    \frac 1{\mu(B(y_0,R))}\int_{B(y_0,R)} |f_2|
    \le \frac C{\mu(B(y_0, 2A_0^4R))}\int_{B(y_0, 2A_0^4R)}|f_2| \, d\mu
    \le C \widetilde Mf(x),
\]
so that $\widetilde Mf_2(y)\le C\widetilde Mf(x)$ for all $y\in
B$. Therefore,
\[
    \frac 1{\mu(B)}\int_B \big(\widetilde Mf_2(y)\big)^\delta \, d\mu(y)
    \le C \big(\widetilde Mf(x)\big)^\delta.
\]
This completes the proof.
\end{proof}

\begin{lem}\label{a2}
    If $\omega\in A_2(X)$, then $\log \omega\in \bmo(X)$.
\end{lem}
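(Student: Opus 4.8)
The plan is to verify the classical John--Nirenberg-type mean-oscillation bound directly, working over a single quasi-metric ball at a time; since the whole argument is local on one ball and uses only Jensen's inequality together with an elementary pointwise estimate, the passage from the Euclidean setting to a space of homogeneous type causes no difficulty. Write $g:=\log\omega$, and for a ball $B$ abbreviate the averages
\[
    \omega_B:=\frac{1}{\mu(B)}\int_B \omega\,d\mu,
    \qquad
    \sigma_B:=\frac{1}{\mu(B)}\int_B \omega^{-1}\,d\mu,
    \qquad
    g_B:=\frac{1}{\mu(B)}\int_B g\,d\mu,
\]
so that the $A_2(X)$ hypothesis reads $\sup_B \omega_B\,\sigma_B=:C_0<\infty$. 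I would use $g_B$ as the constant subtracted in the mean oscillation, so the goal becomes a uniform bound on $\frac{1}{\mu(B)}\int_B |g-g_B|\,d\mu$. The first step is to record the two consequences of Jensen's inequality applied to the convex function $e^t$ on the probability space $(B,\mu/\mu(B))$, namely $e^{g_B}\le \omega_B$ and $e^{-g_B}\le \sigma_B$.

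Next I would split the oscillation into its positive and negative parts and estimate each via the elementary inequality $\log v\le v$, valid for all $v>0$. On the set $\{g>g_B\}$ one has $g-g_B=\log(\omega\,e^{-g_B})$, whence
\[
    \frac{1}{\mu(B)}\int_B (g-g_B)^+\,d\mu
    \le \frac{1}{\mu(B)}\int_{\{g>g_B\}} \omega\,e^{-g_B}\,d\mu
    \le e^{-g_B}\,\omega_B
    \le \sigma_B\,\omega_B
    \le C_0,
\]
where the last two inequalities use the Jensen bound $e^{-g_B}\le\sigma_B$ and then $A_2$. Symmetrically, on $\{g<g_B\}$ one has $g_B-g=\log(e^{g_B}\omega^{-1})$, and the same two ingredients give
\[
    \frac{1}{\mu(B)}\int_B (g_B-g)^+\,d\mu
    \le e^{g_B}\,\sigma_B
    \le \omega_B\,\sigma_B
    \le C_0.
\]
Adding the two estimates yields $\frac{1}{\mu(B)}\int_B |g-g_B|\,d\mu\le 2C_0$ uniformly in $B$, which is exactly the assertion that $\log\omega\in\bmo(X)$.

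I do not anticipate a genuine obstacle here; the only point requiring care is bookkeeping about which definition of the one-parameter space $\bmo(X)$ is in force. The argument above produces the classical Coifman--Weiss mean-oscillation bound over quasi-metric balls, so if the ambient definition of $\bmo(X)$ in this section is instead the wavelet/Carleson formulation inherited from the product theory, one additional line invoking the (standard) equivalence of the two one-parameter $\bmo$ norms on a space of homogeneous type would be needed. Everything else — the two Jensen inequalities and the inequality $\log v\le v$ — is elementary and dimension-free, so the estimate transfers verbatim to the homogeneous-type setting.
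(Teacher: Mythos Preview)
Your proof is correct and is precisely the standard argument the paper has in mind: the paper omits the proof entirely, remarking only that it ``echoes the Euclidean version'' and citing Duoandikoetxea's textbook, where exactly this Jensen-plus-$\log v\le v$ computation appears. Your caveat about which $\bmo$ definition is in force is prudent but unnecessary here, since in the one-parameter section the paper works directly with the classical mean-oscillation norm over quasi-metric balls.
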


We omit the proof of Lemma~\ref{a2}, which echoes the Euclidean
version (see for example~\cite{D}).

We are ready to show the main result in the one-parameter case.
We follow the proof in~\cite{JJ}.

\begin{proof}[Proof of Theorem \ref{thm:1.1} for one parameter]
Since $H^1(X)$ is a subspace of $L^1(X)$, it follows from
Fatou's lemma that $f\in L^1(X)$. To show \eqref{eq:1.1} for
all $\phi\in\vmo(X)$, by density it suffices to consider
$\phi\in \mathop G\limits^{\circ}(\beta,\gamma)$. Fix
$\delta\in (0,\frac1{2A_0})$ and pick $\eta>0$ such that
$\eta\exp(\delta^{-1})\le \delta C_\mu^{\log_2 \delta}$ and
$\int_E |f| \, d\mu \le \delta$ whenever $\mu(E)\le
C\eta\exp(\delta^{-1})$. Now choose $k$ large enough so that
$$\mu(E_k):=\mu\big(\big\{x\in X : |f_k(x)-f(x)|>\eta\big\}\big)\le \eta.$$

We construct a bump function~$\tau(x)$ on~$X$, as follows.
Define
$$\tau(x):=\max\big\{0, 1+\delta\log (M \chi_{{}_{E_k}})(x)\big\}.$$
It is clear that $0\le \tau(x)\le 1$ and $\tau\equiv 1$
$\mu$-almost everywhere on $E_k$. Also, $\|\tau\|_{\bmo(X)}\le
2\delta\|\log (M\chi_{{}_{E_k}})^{1/2}\|_{\bmo(X)}\le C\delta$
due to Lemmas \ref{max} and \ref{a2}. By the weak $(1,1)$
estimate for $M$ with respect to $\mu$,
$$\mu\big(\text{supp}(\tau)\big)\le C\mu(E_k)\exp(\delta^{-1})\le C\eta\exp(\delta^{-1}).$$
Consequently,
$$\int_{\text{supp}(\tau)}|f| \, d\mu\le \delta.$$
We now write
\begin{align*}
\bigg|\int_X(f-f_k)\phi \, d\mu\bigg|
&\le \bigg|\int_X (f-f_k)\phi(1-\tau) \, d\mu\bigg|+\bigg|\int_X (f-f_k)\phi\tau \, d\mu\bigg|\\
&\le \eta\|\phi\|_{L^1(d\mu)}+\int_{\text{supp}(\tau)}|f| \, d\mu+\bigg|\int_X f_k\phi\tau \, d\mu\bigg|\\
&\le \delta+\delta+\bigg|\int_X f_k\phi\tau \, d\mu\bigg|.
\end{align*}
The proof of \eqref{eq:1.1} will therefore be established provided we verify
\begin{equation}\label{eq:3.4}
    \|\phi\tau\|_{\bmo(X)}\le C\delta.
\end{equation}
Suppose $B=B(y_0,r_0)$ with $r_0<\delta$. The H\"older
regularity of $\phi$ gives
\begin{align*}
    \frac1{\mu(B)}\int_B |\phi\tau-(\phi\tau)_B| \, d\mu
    &\le \frac2{\mu(B)}\int_B |\phi\tau-\phi_B\tau_B| \, d\mu\\
    &\le  \frac2{\mu(B)}\int_B |\phi\tau-\phi_B\tau| \, d\mu+\frac {2|\phi_B|}{\mu(B)}\int_B |\tau-\tau_B| \, d\mu\\
    &\le C\delta^\beta+2\|\phi\|_{L^\infty}\|\tau\|_{\bmo(X)}\\
    &\le C(\delta^\beta+\delta).
\end{align*}
For $r_0>\delta$ and $B(y_0,\delta)\cap
B(x_0,\delta^{-1})=\varnothing$, the size condition of $\phi$
yields
\begin{align*}
    \frac1{\mu(B)}\int_B |\phi\tau-(\phi\tau)_B| \, d\mu
    &\le \frac2{\mu(B)}\int_B |\phi\tau| \, d\mu\\
    &\le  C\delta^\gamma.
\end{align*}
For $r_0>\delta$ and $B(y_0,\delta)\cap
B(x_0,\delta^{-1})\ne\varnothing$, we obtain
$B(y_0,\delta^{-1})\subset B(x_0,A_0 \delta^{-1})$ and hence
$\mu(B(x_0,\delta^{-1}))\le \mu(B(y_0,A_0 \delta^{-1}))$. The
doubling condition shows that $$\mu(B(y_0,A_0\delta^{-1}))\le
C_\mu^{\log_2(A_0\delta^{-2})}\mu(B(y_0,\delta)).$$ Thus,
$$\frac 1{\mu(B)}\le \frac{C_\mu^{\log_2(A_0\delta^{-2})}}{\mu(B(y_0,A_0\delta^{-1}))}
\le \frac{C_\mu^{\log_2(A_0\delta^{-2})}}{\mu(B(x_0,\delta^{-1}))}
\le \frac {C_\mu^{\log_2(A_0\delta^{-2})}}{V_1(x_0)},$$
and then
\begin{align*}
    \frac1{\mu(B)}\int_B |\phi\tau-(\phi\tau)_B| \, d\mu
    &\le \frac2{\mu(B)}\int_B |\phi\tau| \, d\mu\\
    &\le  \frac {2C_\mu^{\log_2(A_0\delta^{-2})}}{V_1(x_0)}\mu(\text{supp}(\tau))\\
    &\le \frac {2C_\mu^{\log_2(A_0\delta^{-2})}}{V_1(x_0)}\eta\exp(\delta^{-1})\le C\delta.
\end{align*}
Therefore,
\begin{align}\label{eqn:3star}
    \frac1{\mu(B)}\int_B |\phi\tau-(\phi\tau)_B| \, d\mu
    \le C\delta.
\end{align}
and \eqref{eq:3.4} follows. By weak-star compactness of the
ball in $H^1$, there exists a $g\in H^1$ with $\|g\|_{H^1}\le1$
and a subsequence $\{f_{k_l}\}_{l\in \Bbb N}$ such that
$\{f_{k_l}\}_{l\in \Bbb N}$ weak-star converges to  $g$. By
\eqref{eq:1.1}, we have $\int f\phi=\int g\phi$ for all
$\phi\in \mathop G\limits^{\circ}(\beta,\gamma)$, and hence
$f=g\in H^1$.
\end{proof}

\section{Proof of Theorem \ref{thm:1.1} in the product case}\label{sec-thm1.1-product}
\setcounter{equation}{0}

We begin by recalling several key tools we will use to pass
from the product Euclidean setting to the setting of product
spaces of homogeneous type. These tools are the random dyadic
lattices, the dyadic product $\bmo$ space, the averaging
theorem relating the dyadic and continuous product $\bmo$
spaces, several properties of product $\littlebmo$ (``little
$\bmo$''), and the construction of a product bump
function~$\tau(x_1,x_2)$ on $X_1\times X_2$. Then we prove
Theorem~\ref{thm:1.1} for product spaces of homogeneous type.

In~\cite[Theorem 5.1]{HK} Hyt\"onen and Kairema constructed
random dyadic lattices on spaces of homogeneous type, extending
an earlier result of Nazarov, Treil and Volberg~\cite{NTV}.
Specifically, there exists a probability space $(\Omega,\PP)$
such that for each $\omega\in\Omega$ there is an associated
dyadic lattice $\D(\omega) = \{Q_\alpha^k(\omega)\}_{k,\alpha}$
related to dyadic points $\{x_\alpha^k(\omega)\}_{k,\alpha}$,
with the properties \eqref{DyadicP1}--\eqref{DyadicP5} above,
and the following \emph{smallness property} holds: there exist
absolute constants $C$, $\eta > 0$ such that
\begin{eqnarray}\label {Rcondition}
    \PP\left(\{\omega\in\Omega: x,x^*
        \mbox{ are not in the same cube } Q\in\D^k(\omega) \}\right)
    \leq C\left(\frac{\rho(x,x^*)} {\delta^k}\right)^\eta
\end{eqnarray}
for all $x,x^*\in X$, where $\D^k(\omega)$ is the set of all
dyadic cubes at level $k$ in $\D(\omega)$.

Fix $\om\in\Om$. For a cube $Q\in\mathcal{D}(\om)$, let
$\textup {ch}(Q)$ denote the set of all children of $Q\in
\D(\omega)$. From \eqref{DyadicP1} and \eqref{DyadicP2}, we
know that~$Q=\bigcup_{I\in \textup {ch}(Q)}I$. For a cube $Q\in
\D(\omega)$, define the {\emph{averaging
operator}}~$E_Q^{\omega}$ by
$$
E_Q^\omega f=E^{\D(\omega)}_Qf:=\Big(\intav_Q f\,d\mu\Big)\chi_Q,
$$
where as usual ${-\!\!\!\!\!\int}_Q
f\,d\mu=\mu(Q)^{-1}\int_Qf\,d\mu$ and $\chi_Q$ is the
characteristic function of $Q$. (We reserve the more usual name
of \emph{expectation operator} for the expectation
$\mathbb{E}_\omega$ over random dyadic lattices, defined
below.) Define the {\emph{difference operator}}
$\Delta_Q^\omega$ by
$$
    \Delta_Q^\omega f=\Delta^{\D(\omega)}_Q f
    :=\Big(\sum_{J\in \textup{ch}(Q)}E^\omega_Jf\Big)-E^\omega_Qf.
$$
For convenience, we sometimes write  $E_Q$ and $\Delta_Q$
instead of $E^{\omega}_Q$ and $\Delta^{\omega}_Q$. Note that
for every $x\in X$, at each level $k$ there exists exactly one
cube $Q^k(x)\in \D^k(\omega)$ such that $x\in Q^k(x)$. So for
each $k\in \Z$ we can define
\[
    E_kf(x)
    := \sum_{\alpha}E_{Q^k_\alpha}f(x)
    = E_{Q^k(x)}f(x)
    \textup { and }
    \Delta_kf(x)
    := \sum_{\alpha}\Delta_{Q^k_\alpha}f(x)
    = E_{k+1}f(x) - E_kf(x).
\]

For $j=1,2$, let $(\Omega_j,\PP_j)$ be a probability space for
$(X_j,\rho_j,\mu_j)$ such that for each $\omega_j\in\Omega_j$
there is an associated dyadic lattice $\mathcal{D}_j(\omega_j)$
satisfying properties (1)--(6). We define the  dyadic product
$\bmo(X_1\times X_2)$ space via the difference operator.
Let~$\Delta^{\omega} := \Delta_{Q_1^{\omega_1}}^{\omega_1}
\Delta_{Q_2^{\omega_2}}^{\omega_2}$ where $Q_1^{\omega_1}\in
\D_1(\omega_1)$ and $Q_2^{\omega_2}\in \D_2(\omega_2)$. Let
$R^\omega$ denote the rectangle $Q_1^{\omega_1}\times
Q_{2}^{\omega_2}$.

\begin{dfn}
    Let $f^\omega(x)=f^{(\omega_1,\omega_2)}(x_1,x_2)$ be a
    locally integrable function on $X_1\times X_2$. We say that $f^\omega$
    belongs to the {\emph{dyadic product bounded mean oscillation
    space}} $\bmo_{\omega_1,\omega_2} :=\break
    \bmo_{\D_1(\omega_1)\times \D_2(\omega_2)}(X_1\times X_2)$ if there
    exists a constant $C>0$ such that for every open set
    $\mathcal{A}\subset X_1\times X_2$,
    \[
        \frac{1}{\mu(\mathcal{A})}
            \sum_{R^\omega\subset \mathcal{A}}\int_{\widetilde{X}}
            |\Delta^\omega f^\omega|^2\,d\mu
        \leq C^2.
    \]
    We define the dyadic product $\bmo$ norm
    $\|f^\omega\|_{\bmo_{\omega_1,\omega_2}}$ of the function
    $f^\omega$ to be the infimum of $C$ such that the inequality
    above holds.
\end{dfn}

\begin{thm}[\cite{CLW}]\label{thm:Bi_BMO}
    Let $(X_1,d_1,\mu_1)$ and $(X_2,d_2,\mu_2)$ be spaces
    of homogeneous type. For $j =
    1$, $2$, let $(\Omega_j,\mathbb{P}_j)$ be a probability space, and
    $\{\D(\om_j)\}_{\om_j\in\Omega_j}$ a collection of random
    dyadic lattices on~$X_j$, such that
    properties~\textup{(1)--(6)} hold. Let $\{f^\omega\}$,
    $\omega := (\omega_1,\omega_2)\in \Omega_1\times \Omega_2$,
    be a family of functions with $f^\omega\in
    \bmo_{\mathcal{D}(\omega_1)\times\mathcal{D}(\omega_2)}(X_1\times X_2)$
    for each $\omega\in\Omega_1\times\Omega_2$, such that
    \begin{enumerate}
            \item[(i)] $\omega\mapsto f^\omega$ is
                measurable, and

            \item[(ii)]
                $\|f^\omega\|_{\bmo_{\mathcal{D}(\omega_1)
                \times\mathcal{D}(\omega_2)}(X_1\times
                X_2)}\leq C_d$ for some constant $C_d$
                independent of $\omega$.
        \end{enumerate}
    Then the function $f$ defined by the expectation
        \[
            f(x):=\mathbb{E}_{\omega}f^{\omega}(x)
        \]
    belongs to  $\bmo(X_1\times X_2)$, and $\|\mathbb{E}_\omega
    f^\omega\|_{\bmo(X_1\times X_2)}\leq CC_d$.
\end{thm}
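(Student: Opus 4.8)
The plan is to verify the Carleson-type condition of Definition~\ref{def-CMO} directly for $f=\mathbb{E}_\omega f^\omega$; that is, I aim to bound $\mathcal{C}_1(f)\le C\,C_d$. Since the Auscher--Hyt\"onen products $\psi_{\alpha_1}^{k_1}\psi_{\alpha_2}^{k_2}$ are product test functions, the pairing commutes with the expectation, so that
\[
    \langle \psi_{\alpha_1}^{k_1}\psi_{\alpha_2}^{k_2},f\rangle
    =\mathbb{E}_\omega\,\langle \psi_{\alpha_1}^{k_1}\psi_{\alpha_2}^{k_2},f^\omega\rangle .
\]
Applying Jensen's inequality in the probability space $(\Omega_1\times\Omega_2,\mathbb{P}_1\times\mathbb{P}_2)$ to move the square inside the expectation gives
\[
    \big|\langle \psi_{\alpha_1}^{k_1}\psi_{\alpha_2}^{k_2},f\rangle\big|^2
    \le \mathbb{E}_\omega\,\big|\langle \psi_{\alpha_1}^{k_1}\psi_{\alpha_2}^{k_2},f^\omega\rangle\big|^2 .
\]
Fixing an arbitrary open set $\Omega\subset X_1\times X_2$ of finite measure, summing over rectangles $R\subset\Omega$ and dividing by $\mu(\Omega)$, the problem reduces to proving, uniformly in $\omega$, the single-lattice wavelet--Carleson estimate $\sum_{R\subset\Omega}|\langle\psi_R,f^\omega\rangle|^2\le C\,C_d^2\,\mu(\Omega)$, after which I take the expectation in $\omega$.

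The core of the argument is to transfer this wavelet--Carleson sum for $f^\omega$ into the dyadic martingale--Carleson sum that the hypothesis controls. Using the two-parameter martingale decomposition $f^\omega=\sum_{R^\omega}\Delta^\omega f^\omega$ over dyadic rectangles $R^\omega=Q_1^{\omega_1}\times Q_2^{\omega_2}$ in $\D_1(\omega_1)\times\D_2(\omega_2)$, I would expand
\[
    \langle\psi_R,f^\omega\rangle=\sum_{R^\omega}\langle\psi_R,\Delta^\omega f^\omega\rangle
\]
and establish an almost-diagonal estimate for the coefficients $\langle\psi_R,\Delta^\omega f^\omega\rangle$. In each parameter separately I would play the cancellation~\eqref{cancellation} and H\"older smoothness~\eqref{Holder-regularity} of $\psi_{\alpha_i}^{k_i}$ against the cancellation and localization of the dyadic difference $\Delta^{\omega_i}_{Q_i^{\omega_i}}$, producing a kernel that decays geometrically both in the scale difference and in the relative distance between the wavelet cube and the dyadic cube, with the exponential decay~\eqref{exponential decay} supplying the spatial tails. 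A tensor-product Schur/Cotlar summation of this two-parameter almost-orthogonal kernel then dominates $\sum_{R\subset\Omega}|\langle\psi_R,f^\omega\rangle|^2$ by a comparable dyadic sum $\sum_{R^\omega\subset\widetilde\Omega}\int_{X_1\times X_2}|\Delta^\omega f^\omega|^2\,d\mu$ over a mildly enlarged open set $\widetilde\Omega$, which hypothesis~(ii) bounds by $C_d^2\,\mu(\widetilde\Omega)$.

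I expect two steps to carry the real weight. The first, and genuinely two-parameter, difficulty is that $\Omega$ is an arbitrary open set rather than a single rectangle, so the almost-orthogonality estimate must be summed against the containment relation $R\subset\Omega$ while preserving the Carleson normalization $\mu(\Omega)^{-1}$; this is exactly the obstruction met by Pipher and Treil in the product Euclidean case, and I would follow their scheme, taking $\widetilde\Omega$ to be a maximal-function enlargement of $\Omega$ so that $\mu(\widetilde\Omega)\le C\,\mu(\Omega)$. The second, homogeneous-type-specific difficulty is the mismatch between the fixed dyadic structure underlying the wavelets $\psi_R$ and the random lattices $\D(\omega)$: here I would invoke the smallness property~\eqref{Rcondition} to bound the probability that the points governing a given wavelet cube fail to lie in a common dyadic cube of comparable level, so that upon taking $\mathbb{E}_\omega$ the ill-aligned configurations are absorbed into the geometric decay already established. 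Assembling these estimates yields $\mathcal{C}_1(f)\le C\,C_d$, whence $f\in\bmo(X_1\times X_2)$ with $\|\mathbb{E}_\omega f^\omega\|_{\bmo(X_1\times X_2)}\le C\,C_d$.
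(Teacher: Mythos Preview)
This theorem is not proved in the present paper; it is quoted from~\cite{CLW} and invoked as a black box in the proof of Theorem~\ref{thm:1.1}. There is therefore no proof here to compare your proposal against.

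That said, your sketch contains a genuine gap. After the Jensen step you assert that ``the problem reduces to proving, uniformly in~$\omega$, the single-lattice wavelet--Carleson estimate $\sum_{R\subset\Omega}|\langle\psi_R,f^\omega\rangle|^2\le C\,C_d^2\,\mu(\Omega)$, after which I take the expectation in~$\omega$.'' Such a uniform-in-$\omega$ bound cannot hold: it would say that each $f^\omega$, assumed only to lie in the \emph{dyadic} product $\bmo$ attached to the random lattice $\D_1(\omega_1)\times\D_2(\omega_2)$, already lies in continuous (wavelet) product $\bmo$ with comparable norm. That implication is false already in the one-parameter Euclidean case, where dyadic $\bmo$ strictly contains $\bmo$; the whole point of an averaging theorem is that the expectation over~$\omega$ is what supplies the missing translates of the grid. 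Your later remark that the smallness property~\eqref{Rcondition} absorbs the ill-aligned configurations ``upon taking $\mathbb{E}_\omega$'' is closer to the actual mechanism, but it is inconsistent with the uniform-in-$\omega$ claim you made just before. The expectation must be kept inside throughout: one splits the interaction between the fixed wavelet rectangles~$R$ and the random dyadic rectangles~$R^\omega$ into a good (well-aligned) part, bounded for each~$\omega$ by the dyadic Carleson hypothesis over an enlarged set, and a bad part whose \emph{expected} contribution is small by~\eqref{Rcondition}. The bad part is not small for any individual~$\omega$, so the averaging cannot be deferred to the end as your outline suggests.
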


\begin{dfn}
    A real-valued function $f\in L^1_{\text{loc}}(X_1\times X_2)$ is in
    the space $\littlebmo(X_1\times X_2)$ (called ``little $\bmo$'' in the
    literature) if its $bmo$ norm is finite:
    \begin{align}
        \|f\|_{\littlebmo(X_1\times X_2)}
        := \sup_R \int_R |f(x_1,x_2)-f_R| \, d\mu_1(x_1) \, d\mu_2(x_2)
        < \infty.
    \end{align}
\end{dfn}

\begin{lem}\label{lem 4.3}
    If $f$ and $g$ belong to $\littlebmo$, then $\max\{f,g\}\in \littlebmo$.
\end{lem}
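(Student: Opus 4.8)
The plan is to reduce the two-dimensional statement to the familiar one-parameter fact that $\max\{a,b\} = \tfrac12(a+b) + \tfrac12|a-b|$ preserves $\bmo$, applied in each variable separately, since $\littlebmo$ is precisely the space where the $\bmo$ condition holds simultaneously in each variable with the other frozen. Concretely, I would first record the pointwise identity
\[
    \max\{f,g\} = \tfrac12(f+g) + \tfrac12|f-g|,
\]
so that it suffices to show $f+g\in\littlebmo$ and $|f-g|\in\littlebmo$. The first is immediate from the triangle inequality and the definition of the $\littlebmo$ norm. For the second, writing $h := f-g$, the task is to prove that $|h|\in\littlebmo$ whenever $h\in\littlebmo$.

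The key step is the standard $\bmo$ estimate for the absolute value, carried out on an arbitrary rectangle $R = Q_1\times Q_2$. The plan is to use that for any real-valued function and any averaging set, the mean oscillation of $|h|$ is controlled by that of $h$: since $\big||h(x)| - |h_R|\big| \le |h(x) - h_R|$ pointwise, one gets
\[
    \frac{1}{\mu(R)}\int_R \big| |h| - |h|_R \big| \, d\mu
    \le \frac{2}{\mu(R)}\int_R \big| |h| - |h_R| \big| \, d\mu
    \le \frac{2}{\mu(R)}\int_R | h - h_R | \, d\mu
    \le 2\|h\|_{\littlebmo},
\]
where the first inequality is the usual fact that the average $|h|_R$ is the best constant only up to a factor of $2$ (any fixed constant, here $|h_R|$, increases the mean oscillation by at most a factor of $2$). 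Taking the supremum over all rectangles $R$ shows $|h|\in\littlebmo$ with $\||h|\|_{\littlebmo}\le 2\|h\|_{\littlebmo}$. Combining, $\max\{f,g\} \in \littlebmo$ with a norm bound of the form $\tfrac12\|f+g\|_{\littlebmo} + \|f-g\|_{\littlebmo} \le C(\|f\|_{\littlebmo}+\|g\|_{\littlebmo})$.

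The main subtlety to be careful about is simply that the $\littlebmo$ norm, unlike the product $\bmo$ norm of Definition~\ref{def-CMO}, is defined rectangle-by-rectangle as a genuine mean oscillation (the displayed supremum over $R$), so all the classical one-parameter arguments transfer verbatim with $R$ playing the role of a ball; no wavelet or Carleson-measure machinery is needed here. In particular the measurability and local integrability of $\max\{f,g\}$ are automatic from those of $f$ and $g$, and the doubling or geometric structure of $X_1\times X_2$ plays no role beyond guaranteeing that rectangles have positive finite measure. I do not expect a genuine obstacle; the only thing to verify cleanly is the elementary inequality $\big||h| - |h_R|\big|\le |h - h_R|$ and the standard factor-of-$2$ comparison between oscillation about the mean and oscillation about an arbitrary constant.
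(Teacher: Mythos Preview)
The paper states this lemma without proof, treating it as a standard fact about mean-oscillation spaces. Your argument is correct and is precisely the classical route: the pointwise identity $\max\{f,g\}=\tfrac12(f+g)+\tfrac12|f-g|$, the triangle inequality for the sum, and the reverse triangle inequality $\big||h|-|h_R|\big|\le|h-h_R|$ together with the factor-of-two comparison between oscillation about the mean and about an arbitrary constant. Since the $\littlebmo$ norm is a genuine rectangle-by-rectangle mean oscillation, these one-parameter manipulations carry over verbatim, exactly as you note; there is nothing further to supply.
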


\begin{lem}\label{lem 4.1}
    Suppose $\Omega$ is an open set in $X_1\times X_2$ with
    finite measure. Let $\mathcal{D}_1$ and $\mathcal{D}_2$ be
    dyadic cubes in $X_1$ and $X_2$, respectively. Then
    \[
        \sum_{R=Q_1\times Q_2\in \mathcal{D}_1\times\mathcal{D}_2, R\subset \Omega}
            \|\Delta_{Q_1\times Q_2} f\|_2^2
        \le \int_{X_1} \sum_{Q_2\in \mathcal{D}_2(\om_2)}
            \|\Delta_{Q_2} f(x_1,\cdot)\|_2^2 \, d\mu_1(x_1).
    \]
\end{lem}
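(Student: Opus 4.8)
The plan is to throw away the geometric constraint $R\subset\Omega$, reduce to a one-parameter orthogonality estimate in the first variable, and then reassemble everything by Fubini--Tonelli. Throughout I write $\Delta_{Q_1\times Q_2}=\Delta_{Q_1}\Delta_{Q_2}$ for the composition of the one-parameter difference operators acting in the separate variables, and I use $\|\cdot\|_2$ for the $L^2(X_1\times X_2)$ norm. Since every summand is nonnegative and the rectangles $R=Q_1\times Q_2\subset\Omega$ form a subcollection of all of $\D_1\times\D_2$, the first step is simply
\[
    \sum_{R=Q_1\times Q_2\subset\Omega}\|\Delta_{Q_1\times Q_2}f\|_2^2
    \le \sum_{Q_1\in\D_1}\sum_{Q_2\in\D_2}\|\Delta_{Q_1}\Delta_{Q_2}f\|_2^2.
\]

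The essential ingredient is a one-parameter Bessel inequality in $x_1$. Fix $Q_2\in\D_2$ and set $g:=\Delta_{Q_2}f$, the difference operator applied only in the second variable; because the two difference operators act on different variables, $(\Delta_{Q_1}\Delta_{Q_2}f)(x_1,x_2)=\big(\Delta_{Q_1}\,g(\cdot,x_2)\big)(x_1)$. The operators $E_k$ on $X_1$ are conditional expectations for the increasing filtration generated by the lattice $\D_1$ and hence are $L^2$-contractions, while $\Delta_k=E_{k+1}-E_k$ are the associated martingale differences; these are mutually orthogonal, and since the level-$k$ cubes are pairwise disjoint one has $\|\Delta_k h\|_{L^2(X_1)}^2=\sum_{Q_1\in\D_1^k}\|\Delta_{Q_1}h\|_{L^2(X_1)}^2$. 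Summing the telescoping Pythagorean identity $\|E_{N+1}h\|_{L^2(X_1)}^2=\|E_{-N}h\|_{L^2(X_1)}^2+\sum_{k=-N}^N\|\Delta_k h\|_{L^2(X_1)}^2$ and letting $N\to\infty$ gives, for $\mu_2$-almost every $x_2$,
\[
    \sum_{Q_1\in\D_1}\big\|\Delta_{Q_1}\,g(\cdot,x_2)\big\|_{L^2(X_1)}^2
    \le \big\|g(\cdot,x_2)\big\|_{L^2(X_1)}^2.
\]

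It remains to reassemble. Integrating the last display against $d\mu_2(x_2)$ and invoking Tonelli's theorem (all integrands are nonnegative) yields, for each fixed $Q_2$, the global bound $\sum_{Q_1\in\D_1}\|\Delta_{Q_1}\Delta_{Q_2}f\|_2^2\le\|\Delta_{Q_2}f\|_2^2$. Summing over $Q_2\in\D_2$, then writing $\|\Delta_{Q_2}f\|_2^2=\int_{X_1}\|\Delta_{Q_2}f(x_1,\cdot)\|_{L^2(X_2)}^2\,d\mu_1(x_1)$ and exchanging the sum over $Q_2$ with the $x_1$-integral (again by Tonelli) produces exactly the right-hand side of the lemma. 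The only step that is more than bookkeeping is the middle paragraph: one must confirm that the dyadic difference operators on $X_1$ genuinely form a martingale difference sequence for the filtration coming from $\D_1$, so that the Bessel inequality and the level-wise disjointness are both available. Granting this standard fact, the chain of inequalities closes immediately.
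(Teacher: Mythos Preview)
Your proof is correct and follows essentially the same approach as the paper: drop the constraint $R\subset\Omega$, exploit orthogonality of the dyadic martingale differences, and finish with Fubini--Tonelli. The only cosmetic difference is that the paper packages the argument by introducing $\tilde f(x_1,\cdot):=\sum_{Q_2}\Delta_{Q_2}f(x_1,\cdot)$ and applying the biparameter Bessel inequality $\sum_R\|\Delta_R\tilde f\|_2^2\le\|\tilde f\|_2^2$ in one shot, whereas you apply the one-parameter Bessel inequality in $x_1$ for each fixed $Q_2$ and then sum; the underlying orthogonality is identical.
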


\begin{proof}
Let $\tilde f(x_1,\cdot)=\sum_{Q_2\in
\mathcal{D}_2(\om_2)}\Delta_{Q_2} f(x_1,\cdot)$ for $x_1\in
X_1$. Then $\Delta_{Q_2} f(x_1,\cdot)=\Delta_{Q_2} \tilde
f(x_1,\cdot)$. Since $\Delta_{Q_1\times Q_2}=\Delta_{Q_2}
\otimes \Delta_{Q_2}$, we get that
$$\Delta_{Q_1\times Q_2} f=\Delta_{Q_1\times Q_2} \tilde f,$$
and so
\begin{align*}
    \sum \|\Delta_{Q_1\times Q_2} f\|_2^2
    &=\sum \|\Delta_{Q_1\times Q_2} \tilde f\|_2^2 \\
    &\le \|\tilde f\|_2^2=\int_{X_1}
        \bigg\|\sum_{Q_2\in \mathcal{D}_2(\om_2)}\Delta_{Q_2} f(x_1,\cdot) \bigg\|_2^2
        \, d\mu_1(x_1)\\
    &=\int_{X_1} \sum_{Q_2\in \mathcal{D}_2(\om_2)}\|\Delta_{Q_2} f(x_1,\cdot)\|_2^2
        \, d\mu_1(x_1).
    \qedhere
\end{align*}
\end{proof}

\begin{lem}\label{lem 4.2}
    Suppose $\phi\in \mathop G\limits^{\circ}(\beta_1,\beta_2,
    \gamma_1, \gamma_2)$ and $b$ is a bounded function with
    $\|b\|_\infty\le 1$. Then, for all $\alpha\in(0,1)$, for each
    open $\Omega\subset X_1\times X_2$, and for each rectangle
    $R=Q_1 \times Q_2 \in \mathcal{D}_1(\om_1) \otimes
    \mathcal{D}_2(\om_2)$, we have
    \begin{equation}\label{eq 4.1}
        \sum_{R\subset \Omega, \, \diam(R) \le \alpha} \|\Delta_R (\phi b)\|_2^2
        \le C(\|b\|_{\littlebmo}+\alpha) \, \mu(\Omega).
    \end{equation}
\end{lem}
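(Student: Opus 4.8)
The plan is to estimate each $\Delta_R(\phi b)=\Delta^{\om_1}_{Q_1}\Delta^{\om_2}_{Q_2}(\phi b)$ by a discrete Leibniz (paraproduct) decomposition that separates the roles of the smooth factor $\phi$ and the bounded factor $b$. Writing $m_R\phi:=\intav_R\phi\,d\mu$ for the average of $\phi$ over $R$, I would first use $\Delta_R(m_R\phi\cdot b)=m_R\phi\,\Delta_R b$ and then split $b=m_Rb+(b-m_Rb)$ in the complementary term, arriving at
$$
\Delta_R(\phi b)=m_R\phi\,\Delta_R b+m_Rb\,\Delta_R\phi+\Delta_R\big((\phi-m_R\phi)(b-m_Rb)\big).
$$
The arrangement is chosen so that in the first two pieces the surviving martingale difference acts on a \emph{fixed} function ($b$ or $\phi$), which is essential for invoking orthogonality; the third piece is a genuine remainder in which both factors oscillate.

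For the first piece I would bound $|m_R\phi|\le\|\phi\|_{L^\infty}$ and recognise $\frac1{\mu(\Omega)}\sum_{R\subset\Omega}\|\Delta_R b\|_2^2$ as the dyadic product $\bmo$ (Carleson packing) expression for $b$, so that this piece contributes $\lesssim\|\phi\|_{L^\infty}^2\,\|b\|_{\bmo_{\om_1,\om_2}}^2\,\mu(\Omega)$. The key structural input here is that the little-$\bmo$ norm dominates the dyadic product $\bmo$ norm, $\|b\|_{\bmo_{\om_1,\om_2}}\lesssim\|b\|_{\littlebmo}$, which is what makes $\|b\|_{\littlebmo}$ (rather than a larger quantity) appear; I would obtain it by passing to the one-parameter $\bmo$ estimate in each slice, relating the product difference to the one-parameter differences through Lemma~\ref{lem 4.1}. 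Since $\|b\|_{\littlebmo}\le 2\|b\|_{L^\infty}\le2$, the square is absorbed into a first power, giving $\lesssim\|b\|_{\littlebmo}\,\mu(\Omega)$ as required.

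The second and third pieces produce the small term, and here the Hölder regularity~\eqref{Holder-regularity} of the test function $\phi$ together with the constraint $\diam(R)\le\alpha$ is decisive. For the second piece, $|m_Rb|\le1$ reduces matters to $\sum_{R\subset\Omega,\,\diam R\le\alpha}\|\Delta_R\phi\|_2^2$; using $\|\Delta_R\phi\|_2\lesssim(\diam R)^{\beta}\,\|\phi\|_{L^\infty(R)}\,\mu(R)^{1/2}$ from the regularity of $\phi$ (with $\beta$ the relevant exponent), summing the \emph{scale-dependent} factor $(\diam R)^{2\beta}$ geometrically over the fine scales and using that the rectangles of a fixed scale inside $\Omega$ pack with total measure $\le\mu(\Omega)$, I would obtain a bound $\lesssim\alpha^{2\beta}\mu(\Omega)$. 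The third piece is handled the same way, placing the oscillation of $\phi$ in $L^\infty(R)$ (again gaining $(\diam R)^{\beta}$) and keeping $\|b-m_Rb\|_{L^2(R)}^2\le4\mu(R)$, which sums geometrically to $\lesssim\alpha^{2\beta}\mu(\Omega)$. This yields the stated estimate, with a positive power of $\alpha$ in place of $\alpha$, which suffices for the application.

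\textbf{Main obstacle.} The delicate point is to extract exactly $\|b\|_{\littlebmo}$ and the packing factor $\mu(\Omega)$ without the sums diverging. The naive bound $\|\Delta_R g\|_2\le\|g\|_{L^2(R)}$ with $g$ depending on $R$ diverges once summed over the infinitely many fine scales, so one must never let the function inside $\Delta_R$ depend on $R$ when using orthogonality; the decomposition above is designed for precisely this, and the error terms converge only because the regularity of $\phi$ supplies the scale-dependent decay $(\diam R)^{2\beta}$ that defeats the divergence over scales. Controlling the first piece by the little-$\bmo$ norm (rather than by an uncontrolled full $L^2$ norm, which is all a direct application of Lemma~\ref{lem 4.1} would give) is the genuinely product-theoretic step, since little $\bmo$ is strictly smaller than product $\bmo$; establishing the embedding $\|b\|_{\bmo_{\om_1,\om_2}}\lesssim\|b\|_{\littlebmo}$ in the homogeneous-type setting, and combining it cleanly with the $\alpha$-gain from $\phi$, is where the main work lies.
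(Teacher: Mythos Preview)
Your Leibniz decomposition is a natural idea, and the first piece is handled correctly; indeed the paper's route also passes through Lemma~\ref{lem 4.1} at the analogous point. The genuine gap is in your treatment of pieces two and three. In the biparameter setting the factor $(\diam R)^{2\beta}$ is \emph{not} summable over scales: for a fixed $Q_1$, as $Q_2$ runs through the dyadic cubes of all (infinitely many) finer generations inside the $x_1$-slice of $\Omega$, one has $\diam R\ge\diam Q_1>0$, so the terms do not decay in the $k_2$-direction, and $\sum_{R\subset\Omega,\,\diam R\le\alpha}(\diam R)^{2\beta}\mu(R)=\infty$ in general. For the second piece this is repairable: a product test function has mixed second-difference regularity, so in fact $\|\Delta_R\phi\|_2\le C(\diam Q_1)^{\beta_1}(\diam Q_2)^{\beta_2}\mu(R)^{1/2}$, and this product decay sums as a product of geometric series. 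For the third piece, however, the crude bound $\|\Delta_R g\|_2\le C\|g\|_{L^2(R)}$ applied to $g=(\phi-m_R\phi)(b-m_Rb)$ only yields $\|\phi-m_R\phi\|_{L^\infty(R)}\le C\big((\diam Q_1)^{\beta_1}+(\diam Q_2)^{\beta_2}\big)$, since $b$ contributes no regularity; this still diverges when summed over the two-parameter lattice. To rescue your approach one would need a genuine \emph{product} Leibniz expansion that tracks separately which of the two one-parameter differences $\Delta_{Q_1}^{(1)}$, $\Delta_{Q_2}^{(2)}$ lands on $\phi$ and which on $b$; only then do the cross terms pick up a geometric factor in one variable and a Carleson (one-parameter $\bmo$) bound in the other.

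The paper sidesteps this entirely. It first proves the one-parameter analogue on a single dyadic cube $Q_0$ with $\diam Q_0\le\alpha$, converting $\sum_{Q\subset Q_0}\|\Delta_Q(\phi b)\|_2^2$ into the oscillation integral $\int_{Q_0}|\phi b-(\phi b)_{Q_0}|^2\,d\mu$ by the martingale Pythagorean identity and then splitting as in~\eqref{eqn:3star}; here the scale sum is one-dimensional and the geometric-series step is valid. The passage to the product case is then by iteration through Lemma~\ref{lem 4.1}, applying the one-parameter estimate slicewise; this is what absorbs the otherwise divergent second sum over scales into the one-parameter bound, leaving only $\int_{X_1}\mu_2(\{x_2:(x_1,x_2)\in\Omega\})\,d\mu_1(x_1)=\mu(\Omega)$.
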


\begin{proof}
The proof is by iteration. For one parameter, it suffices to
prove \eqref{eq 4.1} for $\Omega=Q_0$, where $Q_0$ is a dyadic
cube in~$X_1$. Without loss of generality we may assume that
$\diam Q_0\le \alpha$. Then
\begin{align*}
    \sum_{Q\subset Q_0} \|\Delta_Q (\phi b)\|_2^2
    &=\int_{Q_0} |\phi b(x)-(\phi b)_{Q_0}|^2 \, d\mu(x) \\
    &\le 2 \int_{Q_0} |\phi b(x)-(\phi)_{Q_0}(b)_{Q_0}|^2 \, d\mu(x)\\
    &\le 2 \int_{Q_0} |\phi b(x)-\phi(x)(b)_{Q_0}|^2 \, d\mu(x)
        + 2 \int_{Q_0} |\phi(x) (b)_{Q_0}-(\phi)_{Q_0}(b)_{Q_0}|^2 \, d\mu(x)\\
    &\le C(\|b\|^2_{bmo}+\alpha) \, \mu(\Omega)
\end{align*}
by inequality~\eqref{eqn:3star}. Applying Lemma~\ref{lem 4.1},
we obtain
\begin{align*}
    \sum_{Q_1\in \mathcal{D}_1(\om_1), Q_2\in \mathcal{D}_2(\om_2)} \|\Delta_{Q_1\times Q_2} f\|^2
    &\le \int_{X_1} \sum_{Q_2\in \mathcal{D}_2(\om_2)} \|\Delta_{Q_2} f(x_1,\cdot)\|^2 \, d\mu_1(x_1) \\
    &\qquad+ \int_{X_2} \sum_{Q_1\in \mathcal{D}_1(\om_1)} \|\Delta_{Q_1} f(\cdot, x_2)\|^2 \, d\mu_2(x_2) \\
    &\le C(\|b\|^2_{bmo}+\alpha)\int_{X_1} \mu_2(\{x_2: (x_1,x_2)\in \Omega\}) \, d\mu_1(x_1) \\
    &\qquad +C(\|b\|^2_{bmo}+\alpha)\int_{X_2} \mu_1(\{x_2: (x_1,x_2)\in \Omega\}) \, d\mu_2(x_2)\\
    &\le 2C(\|b\|^2_{bmo}+\alpha) \, \mu(\Omega).
    \qedhere
\end{align*}
\end{proof}

Next we construct a bump function $\tau(x_1,x_2)$ in the
product setting.

\begin{lem}\label{lem 4.4}
    Let $E $ be a subset of $X_1\times  X_2$  with finite
    measure, and let $\delta\in (0,1)$ be a given parameter.
    Then there exists a function $\tau\in \littlebmo$ such that
    $\tau\equiv 1$ on $E$, $\|\tau\|_{bmo}<C_1\delta$, and $\mu($supp
    $\tau)<C_2e^{2/\delta}\mu(E)$, where $C_1$ and $C_2$ are
    some absolute constants.
\end{lem}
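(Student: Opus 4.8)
The plan is to construct $\tau$ as a product-setting analogue of the one-parameter bump function from the proof of Theorem~\ref{thm:1.1} in Section~\ref{sec-thm1.1-one-parameter}. There, the bump was built from the logarithm of a Hardy--Littlewood maximal function of the indicator of the exceptional set. In the product setting the natural object is the strong maximal function. First I would define, for $i=1,2$, the one-parameter maximal operators $M_i$ acting in the $x_i$ variable alone, and set $M_s := M_1 M_2$ (the strong, or iterated, maximal function). Then I would put
\begin{equation*}
    \tau(x_1,x_2) := \max\bigl\{0,\ 1 + \tfrac{\delta}{2}\log\bigl(M_s\chi_E\bigr)(x_1,x_2)\bigr\}.
\end{equation*}
As in the one-parameter case it is immediate that $0\le\tau\le 1$ and that $\tau\equiv 1$ $\mu$-almost everywhere on $E$, since $M_s\chi_E\ge 1$ there.

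**Next I would control the measure of the support.** The support of $\tau$ is contained in the set $\{M_s\chi_E > e^{-2/\delta}\}$. Here I would invoke the weak-type bound for the strong maximal operator: since $M_1$ and $M_2$ are each of weak type $(1,1)$ (Theorem~\ref{w1} with $\omega\equiv 1$, applied on each factor), their composition $M_s=M_1M_2$ satisfies a weak-type $L(\log L)$, or at worst a weak $L^1$ estimate after iterating the weak $(1,1)$ bounds through Kolmogorov's inequality as in the proof of Lemma~\ref{max}. Either way one obtains $\mu(\{M_s\chi_E>\lambda\})\le C\lambda^{-1}(1+\log^+\lambda^{-1})\,\mu(E)$, so that at $\lambda=e^{-2/\delta}$ the bound becomes $\mu(\mathrm{supp}\,\tau)\le C_2 e^{2/\delta}\mu(E)$, giving the required estimate. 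The logarithmic factor is absorbed into the constant $C_2$ since it contributes only a polynomial-in-$1/\delta$ factor against the exponential $e^{2/\delta}$.

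**The main work is the $bmo$ estimate** $\|\tau\|_{bmo}<C_1\delta$. This is where the product structure matters: the $\littlebmo$ norm requires a uniform mean-oscillation bound over \emph{every} rectangle $R=Q_1\times Q_2$, controlling oscillation in each variable separately. The strategy is to handle each variable by the one-parameter theory and then combine. For fixed $x_2$, the function $x_1\mapsto\log(M_s\chi_E)(x_1,x_2)$ has $\bmo(X_1)$ norm bounded by an absolute constant, because $M_s\chi_E$ is (up to the action of $M_2$ in the frozen variable) a maximal function of a locally integrable function, so Lemma~\ref{max} gives $(M_s\chi_E)^{1/2}\in A_1(X_1)$ uniformly in $x_2$, and then Lemma~\ref{a2} (via $A_1\subset A_2$) yields the uniform $\bmo$ bound; the same holds with the roles of the variables reversed. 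Multiplying by $\delta/2$ produces the factor $\delta$. The remaining point is to pass from $\log(M_s\chi_E)$ to $\tau=\max\{0,1+\tfrac{\delta}{2}\log(M_s\chi_E)\}$: since $1$ and $1+\tfrac{\delta}{2}\log(M_s\chi_E)$ are each in $\littlebmo$ (the constant trivially, the other by the preceding two-sided estimate), Lemma~\ref{lem 4.3} guarantees $\tau\in\littlebmo$ with the norm controlled by the maximum of the two, giving $\|\tau\|_{bmo}<C_1\delta$.

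\textbf{The hard part} will be establishing the uniform-in-the-frozen-variable $\bmo$ estimate for $\log M_s\chi_E$ in each separate variable, since $M_s\chi_E$ is genuinely a \emph{two-variable} maximal function and freezing $x_2$ does not immediately reduce it to a one-parameter maximal function of a fixed $L^1_{\mathrm{loc}}(X_1)$ function. I expect to resolve this by writing $M_s\chi_E = M_1(M_2\chi_E)$ and applying the one-parameter result (Lemmas~\ref{max} and~\ref{a2}) to the function $g(x_1):=(M_2\chi_E)(x_1,x_2)$ with $x_2$ frozen, noting that $g\in L^1_{\mathrm{loc}}(X_1)$ with $M_1 g<\infty$ a.e.\ because $M_s\chi_E<\infty$ a.e., which is where the hypothesis $\mu(E)<\infty$ together with the weak-type bounds is used. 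Once both separate-variable estimates are in hand, Lemma~\ref{lem 4.3} does the final gluing, and the three claimed properties of $\tau$ follow.
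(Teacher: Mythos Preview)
Your overall strategy---build $\tau$ as the positive part of $1$ plus $\delta$ times the logarithm of a maximal-type function of $\chi_E$, then invoke Coifman--Rochberg and the implication $A_1 \Rightarrow \log \in \bmo$, and finally apply Lemma~\ref{lem 4.3}---is exactly the paper's. The difference lies in \emph{which} maximal object you take the logarithm of, and that difference is where your argument breaks.

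You set $M_s := M_1 M_2$ and work with $M_s\chi_E$ directly. Your $\bmo(X_1)$ bound for frozen $x_2$ is fine: writing $(M_1 M_2\chi_E)(\cdot,x_2) = M_1 g$ with $g(x_1) = (M_2\chi_E)(x_1,x_2)$ puts you squarely in Lemma~\ref{max}. But the sentence ``the same holds with the roles of the variables reversed'' does not go through. For fixed $x_1$, the slice $(M_1 M_2\chi_E)(x_1,\cdot)$ is \emph{not} of the form $M_2 h$ for any $h$: the outer operator $M_1$ has already averaged in the $x_1$ variable, and what remains as a function of $x_2$ has no maximal-function structure. One-parameter Coifman--Rochberg therefore gives nothing in the $x_2$ direction, and you have no uniform $\bmo(X_2)$ control on those slices. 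Since the $\littlebmo$ norm tests oscillation over \emph{rectangles}, control in only one factor is not enough.

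The paper sidesteps this asymmetry by replacing $M_s\chi_E$ with the Rubio de Francia iterate
\[
    m(x_1,x_2) = K^{-1}\sum_{k=0}^\infty c^k M_s^{(k)}\chi_E(x_1,x_2),
\]
where $M_s$ is the strong (rectangle) maximal function, $c>0$ is chosen so the series converges in $L^2$, and $K=\sum_k c^k$. By construction $M_s m \le c^{-1}K\,m$, so $m$ is a genuine \emph{product} $A_1$ weight, and $\log m \in \littlebmo$ follows directly from the rectangle version of the Jensen argument---no freezing of variables is needed. The support bound then comes from $\|m\|_2 \le C\|\chi_E\|_2 = C\mu(E)^{1/2}$ (by $L^2$-boundedness of $M_s$) together with Chebyshev at level $e^{-1/\delta}$; this replaces your weak-type estimate, which in any case would not apply cleanly to the iterated sum $m$.
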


\begin{proof}
Let $M_s$ be the strong maximal function, in which the averages
are taken over arbitrary rectangles in $X_1\times X_2$. A
weight $w$ is in $A_1(X_1 \times X_2)$ if there exists a
constant $C$ such that $M_s w(x)\le Cw(x)$ for $\mu$-almost
every $x\in X_1\times X_2$.

We define the following $A_1$ weight, with $M^{(k)}_s$ denoting
the $k$-fold iteration of the strong maximal function:
\[
    m(x_1,x_2)
    = K^{-1}\sum_{k=0}^\infty c^kM^{(k)}_s \chi_E(x_1,x_2),
\]
where $K=\sum_k c^k$ and $c>0$ is chosen to insure the
convergence of the series. Then $\|m\|_2\le C\|\chi_E\|_2 =
C\mu(E)^{1/2}.$ Observe that $m=1$ $\mu$-almost everywhere
on~$E$, and $m\le 1$ $\mu$-almost everywhere outside~$E$.

Define the function
\[
    \tau(x_1,x_2)
    := \max\{0,1+\delta \log m(x_1,x_2)\}.
\]
The function $\tau$ is in $\littlebmo$, and satisfies $\tau=1$
$\mu$-almost everywhere on~$E$. By Lemma~\ref{lem 4.3} and the
fact that $\log w\in bmo$ for every $A_1$ weight~$w$, which is
proved exactly as in the one-parameter Euclidean setting, we
have $\|\tau\|_{bmo}\le C\delta$.

The estimate for the size of the support of $\tau$ follows from
Tchebychev's theorem and the estimate $\|m\|_2\le
C\mu(E)^{1/2}$.
\end{proof}

We are ready to prove our main result for product spaces of
homogeneous type. We follow the lines of the product Euclidean
proof from~\cite{PT}.

\begin{proof}[Proof of Theorem~\ref{thm:1.1} in the product case]
First note that $ \mathop
G\limits^{\circ}(\beta_1,\beta_2,\gamma_1,\gamma_2) $ is dense
in \vmo$(X_1\times X_2)$. To prove Theorem  \ref{thm:1.1}, it
suffices to show \eqref{eq:1.1} for all $\phi\in \mathop
G\limits^{\circ}(\beta_1,\beta_2,\gamma_1,\gamma_2)$.

Next, note that as shown in \cite{HLPW}, $H^1(X_1\times X_2)$
is a subspace of $L^1(X_1\times X_2)$. Thus, since $
f_n\rightarrow f $ a.e, and $\|f_n\|_{H^1(X_1\times X_2)}\leq
1$, by Fatou's lemma we have  that $f\in L^1(X_1\times X_2)$
with $\|f\|_{L^1(X_1\times X_2)}\leq 1$.

Fix $\delta\in (0,\frac1{2A_0})$ and pick $\eta>0$ such that
$\eta\exp(2/\delta)\le \delta C_\mu^{\log_2 \delta}$ and
$\int_E |f| \, d\mu \le \delta$ whenever $\mu(E)\le
C_2\eta\exp(2/\delta)$, where $C_2$ is as in Lemma~\ref{lem
4.4}. Now choose $K_0$ large enough such that when $k>K_0$,
\[
    \mu(E_k)
    :=\mu\big(\big\{(x_1,x_2)\in X_1\times X_2 :
        |f_k(x_1,x_2)-f(x_1,x_2)|>\eta\big\}\big)
    \le \eta.
\]
Define
$$\tau(x_1,x_2)=\max\big\{0, 1+\delta\log m(x_1,x_2)\big\},$$
where $m(x_1,x_2)=K^{-1}\sum_{\ell=0}^\infty c^\ell
M^{(\ell)}_s \chi_{E_k}(x_1,x_2)$ as defined in Lemma~\ref{lem
4.4}. It is clear that $0\le \tau(x_1,x_2)\le 1$ and $\tau=1$
$\mu$-almost everywhere on $E_k$. By Lemma~\ref{lem 4.4}, we
have $\tau\in bmo$ with  $\|\tau\|_{bmo}\le C_2\delta$ and
$$\int_{\text{supp}(\tau)}|f| \, d\mu \le \delta.$$
For every $k> K_0$, we now write
\begin{align*}
    \int_{X_1\times X_2}(f-f_k)\phi \, d\mu
    =\int_{X_1\times X_2} (f-f_k)\phi(1-\tau) \, d\mu
        +\int_{X_1\times X_2} (f-f_k)\phi\tau \, d\mu.
\end{align*}
Note that $\tau=1$ $\mu$-almost everywhere on $E_k$. In the
complement of $E_k$ we have $|f-f_k|<\eta$. Thus the first
interval on the right-hand side of the above equality is
bounded by $\eta\|\phi\|_{L^1(X_1\times X_2)}$, which is in
turn less than $\delta$ if $\eta$ is sufficiently small.
Further, the second interval is bounded by
\begin{align*}
    &\int_{\text{supp}(\tau)}|f\phi| \, d\mu
        + \bigg|\int_{X_1\times X_2} f_k\phi\tau \, d\mu\bigg|\\
    &\le \delta+\bigg|\int_{X_1\times X_2} f_k\phi\tau \, d\mu\bigg|.
\end{align*}

The proof of \eqref{eq:1.1} will therefore be established provided we verify
\begin{equation}\label{eq:4.7}
    \|\phi\tau\|_{BMO(X_1\times X_2)}\le C\delta.
\end{equation}

We will now verify \eqref{eq:4.7} by first proving that the
dyadic BMO norm of $\phi\tau$ has the required estimate, and
then by using Theorem \ref{thm:Bi_BMO}.

For every arbitrary open set $\mathcal{A}\subset X_1\times X_2$
with finite measure and $x\in \mathcal{A}$, there exists a
constant $r(x)<\frac \delta{3A_0}$ such that $B(x,r(x))\subset
\mathcal{A}$ and then
\[
    \mathcal{A}
    = \bigcup\limits_{x\in\mathcal{A}} B(x,r(x)).
\]
By \cite[Lemma 3]{C1}, there exists a
countable subfamily of disjoint spheres $B(x_i,r(x_i))$ such
that each sphere $B(x,r(x)), x\in \mathcal{A}$ is contained in
$B(x_i,3A_0r(x_i))$ for some $i\in \Bbb N$. Hence,
\[
    \int_{\mathcal{A}} |\phi\tau|^2 \, d\mu
    \le \sum_{i=1}^\infty \int_{B(x_i,3A_0r(x_i))} |\phi\tau|^2 \, d\mu.
\]
Since $3A_0r(x_i)<\delta$, we use Lemma \ref{lem 4.2} to get
\[
    \int_{B(x_i,3A_0r(x_i))} |\phi\tau|^2 \, d\mu
    = \sum_{R\subset B(x_i,3A_0r(x_i))} ||\Delta_R (\phi b)\|_2^2
    \le C(\|\tau\|_{bmo}+\delta)\mu(B(x_i,3A_0r(x_i))).
\]
Therefore,
\[
    \int_{\mathcal{A}} |\phi\tau|^2 \, d\mu(x)
    \le \sum_{i=1}^\infty \int_{B(x_i,3A_0r(x_i))} |\phi\tau|^2 \, d\mu(x)
    \le C\delta \sum_i \mu(B(x_i,3A_0r(x_i))).
\]
Since $\mu(B(x_i,3A_0r(x_i)))\le C\mu(B(x_i,r(x_i)))$ and
$\{B(x_i,r(x_i))\}_{i\in \Bbb N}$ are disjoint, we have
\[
    \int_{\mathcal{A}} |\phi\tau|^2 \, d\mu(x)
    \le\sum_i \mu(B(x_i,3A_0r(x_i)))\le C\mu(\mathcal{A}).
\]
This completes the proof of~\eqref{eq:4.7}.
\end{proof}


\end{document}